\newcommand{\B}{\boldsymbol}
\newcommand{\C}[1]{\mathcal{#1}}
\newcommand{\D}[1]{\mathbb{#1}}
\newtheorem{theorem}{Theorem}[section]
\newtheorem{proposition}[theorem]{Proposition}
\newtheorem{lemma}[theorem]{Lemma}
\newtheorem{corollary}[theorem]{Corollary}
\newtheorem{example}[theorem]{Example}
\newtheorem{definition}[theorem]{Definition}
\newcommand{\Real}{{\mathbb R}}
\newcommand{\Least}{{\mathcal F}}
\newcommand{\BR}{{\Bic(\Real)}}
\newcommand{\id}{\mathrm{id}}
\newcommand{\Const}{\mathrm{Const}}
\newcommand{\Bic}{\mathrm{Bic}}
\newcommand{\BISH}{\mathrm{BISH}}
\newcommand{\CST}{\mathrm{CST}}
\newcommand{\Ind}{\mathrm{Ind}}
\newcommand{\Mor}{\mathrm{Mor}}
\newcommand{\BS}{\mathrm{BS}}
\newcommand{\TOT}{\Leftrightarrow}
\newcommand{\To}{\Rightarrow}
\newcommand{\CZF}{\mathrm{CZF}} 
\newcommand{\REA}{\mathrm{REA}}
\newcommand{\VoF}{\bigvee F_{0}}
\newcommand{\VoG}{\bigvee G_{0}}
\newcommand{\V}{\bigvee}
\newcommand{\sub}{\textnormal{\texttt{sub}}}
\newcommand{\com}{\textnormal{\texttt{abel}}}
\newcommand{\normal}{\textnormal{\texttt{normal}}}
\newcommand{\Normal}{\textnormal{\texttt{Normal}}}
\newcommand{\Center}{\textnormal{\texttt{Center}}}
\newcommand{\Hyp}{\textnormal{\texttt{Hyp}}}
\newcommand{\Goal}{\textnormal{\texttt{Goal}}}
\newcommand{\BTopGrp}{\textnormal{\textbf{BTopGrp}}}
\newcommand{\Ker}{\textnormal{\texttt{Ker}}}
\newcommand{\MOR}{\textnormal{\textbf{Mor}}}
\newcommand{\NS}{\mathrm{NS}}
\begin{document}

\date{}

\title{\textbf{Closed subsets in Bishop topological groups}}

\author{Iosif Petrakis\\	
Mathematics Institute, Ludwig-Maximilians Universit\"{a}t M\"{u}nchen\\
petrakis@math.lmu.de}  

%






\maketitle

\begin{abstract}
We introduce the notion of a Bishop topological group i.e., a group $X$ equipped with a Bishop topology 
of functions $F$ such that the group operations of $X$ are Bishop morphisms with respect to $F$. A closed subset 
in the neighborhood structure of $X$ induced by its Bishop topology $F$ is defined in a positive way i.e., not as the
complement of an open subset in $X$. The corresponding closure operator, although it is not topological, in 
the classical sense, does not involve sequences. As countable choice (CC) is avoided, and in agreement
with Richman's critique on the use of CC in constructive mathematics, the fundamental facts on closed subsets in Bishop
topological groups shown here have a clear algorithmic content. We work within Bishop's informal
system of constructive mathematics $\BISH$, without countable choice, equipped with inductive definitions with 
rules of countably many premises.
\end{abstract}

\section{Introduction}
\label{sec: intro}

\noindent

The constructive non-viability of the notion of topological space is corroborated by the fact that 
many classical topological phenomena, like the duality between open and closed sets, are compatible only 
with classical logic. In a straightforward, constructive translation of general topology we cannot accept 
that the set-theoretic complement of a closed set is open. E.g., $\{0\}$ is a closed subset $\Real$, with 
respect to the topology on $\Real$ induced by its standard metric, while its complement cannot be accepted 
constructively as open, since that would imply the implication
$\neg(x = 0) \To (x > 0 \vee x < 0),$
which is (constructively) equivalent to the constructively unacceptable principle of Markov (see~\cite{BR87},
p.~15). The standard use of negative definitions in classical topology does not permit a smooth translation 
of classical topology to a constructive framework.

In~\cite{Bi67}, chapter 3, Bishop defined a \textit{neighbourhood space} 
$\C N := (X, I, \nu)$, where $X, I$ are sets, and $(\nu_i)_{i \in I}$ is a family of subsets of $X$ indexed by $I$
(see~\cite{Pe20} for an elaborate study of this notion) that satisfies the following covering $(\NS_1)$ and 
neighborhood-condition $(\NS_2)$:\\[1mm]
$(\NS_1)$ $\bigcup_{i \in I}\nu(i) = X$.\\[1mm]
$(\NS_2)$ $\forall_{x \in X}\forall_{i, j \in I}\big[x \in \nu(i) \cap \nu(j) \To 
\exists_{k \in I}\big(x \in \nu(k) \ \& \  \nu(k) \subseteq \nu(i) \cap \nu(j)\big)\big]$. \\[1mm]
A subset $O$ of $X$ is called $\nu$-\textit{open}, if 
$\forall_{x \in O}\exists_{i \in I}\big(x \in \nu(i) \ \& \ \nu(i) \subseteq O\big)$.
An $\nu$-closed set $C$ is not defined negatively, as the complement of a $\nu$-open set, 
but \textit{positively} by the condition
$$\forall_{x \in X}\big(x \in \overline{C} \To x \in C\big),$$
$$x \in \overline{C}  :\TOT \forall_{i \in I}(x \in \nu(i) \To \nu(i) \between C),$$
where, if $A, B$ are subsets of $X$, then $A \between B : \TOT \exists_{y \in X}(y \in A \cap B)$.
If $(Y, J, \mu)$ is a
neighborhood space, a function $h : X \to Y$ is \textit{neighborhood-continuous}, if $h^{-1}(\mu(j))$ 
is $\nu$-open, for every $j \in J$. The concept of neighborhood space was proposed as a \textit{set-theoretic 
alternative} to the notion of topological space, and it is a formal topology in the sense of Sambin~\cite{Sa87},~\cite{Sa20}.

In~\cite{Bi67}, chapter 3, Bishop also defined the notion of \textit{function space} $\C F := (X, F)$,
where $X$ is a set and $F$ is a subset of $\D F(X)$, the real-valued functions on $X$, that satisfies 
the closure conditions of the set $\BR$ of Bishop-continuous functions from $\Real$ to $\Real$. 
Bishop called $F$ a \textit{topology} (of functions) on $X$. The set $\BR$ 
of Bishop-continuous functions $\phi : \Real \to \Real$ is the canonical topology of functions on $\Real$.
Bishop also defined inductively\footnote{This definition, together with the notion of the least algebra
of Borel sets generated by a family of complemented subsets of $X$, relative to a given set of real-valued
functions on $X$, are the main inductive definitions found in~\cite{Bi67}, both in chapter 3. The  notion 
of the least algebra of Borel sets is avoided in~\cite{BC72} and~\cite{BB85}, and the notion of the least 
topology is not developed neither in~\cite{Bi67} nor in~\cite{BB85}.} the \textit{least topology} of functions 
on $X$ that includes a given subset $F_0$ of $\D F(X)$. The concept of function space was proposed as a 
\textit{function-theoretic alternative} to the notion of topological space.

In~\cite{BB85}, p. 77, Bishop and Bridges expressed in a clear way the superiority of the 
function-theoretic notion of function space to the set-theoretic notion of neighborhood space.
%
%
%
As Bridges and Palmgren remark in~\cite{BP18}, ``little appears to have been done'' in the theory of 
neighborhood spaces. Ishihara has worked in~\cite{Is12} (and with co-authors in~\cite{IMSV06}) on their
connections to the apartness spaces of Bridges and V\^{\i}\c{t}\u{a} (see~\cite{BV11}), and in~\cite{Is13}
on their connections to Bishop's function spaces, while in~\cite{IP06} Ishihara and Palmgren studied the 
notion of quotient topology in neighborhood spaces.

Bridges talked on Bishop's function spaces at the first workshop on formal topology in 1997, and 
revived the subject of function spaces in~\cite{Br12}.  
Motivated by Bridges's paper, Ishihara showed in~\cite{Is13} the existence of an adjunction between the
category of neighbourhood spaces and the category of $\Phi$-closed pre-function spaces, where a 
pre-function space is an extension of the notion of a function space. In~\cite{Pe15a}-\cite{Pe20c} 
we try to develop the theory of function spaces, or \textit{Bishop spaces}, as we call them.
In~\cite{Pe19c} and in~\cite{Pe19b} we also study the applications of the theory of set-indexed families of 
Bishop sets in the theory of Bishop spaces. In~\cite{Ge18} 
connections between the theory of Bishop spaces and the theory of $C$-spaces of Escard\'o and Xu, developed
in~\cite{Xu15} and in~\cite{EX16}, are studied.

A group $X$ is a \textit{topological group}, if there is a topology of open sets $\C T$ on $X$ such that the corresponding operations
$+ \colon X \times X \to X$ and $- \colon X \to X$ are continuous functions with respect to $\C T$. The theory
of topological groups is very well-developed, with numerous applications 
(see~\cite{AT08},~\cite{Hi74} and~\cite{Wa93}). We call a group $X$, equipped with a Bishop topology 
of functions $F$, a \textit{Bishop topological group}, if the corresponding group operations
$+ \colon X \times X \to X$ and $- \colon X \to X$ are Bishop morphisms with respect to $F$. A \textit{Bishop morphism}
between Bishop spaces is the notion of arrow in the category of Bishop spaces that was introduced 
by Bridges in~\cite{Br12} and corresponds to the notion of a continuous function between topological spaces.

Most of the concepts of the theory of Bishop spaces are function-theoretic i.e., they are determined by the Bishop
topology of functions $F$ on $X$. Each Bishop topology $F$ generates a \textit{canonical neighborhood structure}, 
a family of basic open sets in $X$, described in section~\ref{sec: nbhd}. As explained above, a closed set $C$
with respect to this neighborhood structure is defined positively, and independently from its set-theoretic complement.
Generally we cannot show constructively that the set-theoretic complement $X \setminus C$ 
of a closed set $C$ is open. What we show in Theorem~\ref{thm: cr} though, is that a positive notion of complement, 
determined by $F$, the \textit{$F$-complement} $X \setminus_F C$ of $C$, is the largest open set included in $X \setminus C$.

In the main core of this paper we prove some fundamental properties of the closed sets in Bishop topological groups. 
Using functions to describe general properties of sets, and working with the aforementioned positive notion of closed set
gives us the opportunity to find constructive proofs with a clear computational content of results, which in 
many cases in the classical theory of topological groups depend on the use of classical negation. 
Moreover, our concepts and results avoid the use even of countable choice (CC). Although practicioners of Bishop-style
constructive mathematics usually embrace CC, avoiding it, and using non-sequential or non-choice-based arguments instead,
forces us to formulate ``better'' concepts and find ``better'' proofs. This standpoint was advocated first by Richman 
(see~\cite{Ri01} and~\cite{Sc04}). 

The study of closed sets in the neighborhood structure induced by the Bishop topology of a Bishop topological group shows
the fruitfulness of combining the two constructive proposals of Bishop to the classical topology of open sets. Moreover, the 
group-structure of a Bishop space $X$ helps us ``recover'' part of the classical duality between closed and open sets.
As Corollary~\ref{cor: corchar1} indicates, there are many cases of closed sets in a Bishop topological group
for which we can show that their set-theoretic complement is open!

The structure of this paper is the following:

\begin{itemize}
 \item In section~\ref{sec: basicdef} we include all definitions and facts on Bishop spaces that are necessary 
 to the rest of the paper. All proofs not given here are found in~\cite{Pe15}. For all results
 on constructive analysis that are used here without proof, we refer to~\cite{BB85}.
 \item In section~\ref{sec: nbhd} we give all definitions and results on the canonical neighborhood structure of a 
 Bishop topology that are used here. Theorem~\ref{thm: cr} is the result of this section that is most relevant to the study of
closed sets in Bishop topological groups. 
 \item In section~\ref{sec: btg} we introduce Bishop topological groups and we prove some of their fundamental properties.
 \item In section~\ref{sec: closed}, the central section of our paper, we prove fundamental properties of closed sets in Bishop
 topological groups. As we work with functions and positively defined concepts, avoiding the use of choice, our proofs
 generate clear algorithms. For all algebraic notions within $\BISH$ used here, we refer to~\cite{MRR88}.
\end{itemize}

%
%
%
%
%
%
%
%
%
%

We work within Bishop's informal system of constructive mathematics $\BISH$, without countable choice, equipped
with inductive definitions with rules of countably many premises. 
A set-theoretic formal framework for this system\footnote{Extensional Martin-L\"of Type Theory or the theory of 
setoids within intensional Martin-L\"of Type Theory are possible type-theoretic systems for this informal system
(see~\cite{CDPS05}), although there choice, in the form of the distributivity of $\prod$ over $\sum$, is provable.} is
Myhill's $\CST^*$ without countable choice, or $\CZF$, equipped with a weak form of Aczel's regular extension axiom $\REA$
(see~\cite{AR10} and~\cite{LR03}).

\section{Fundamentals of Bishop spaces}
\label{sec: basicdef}

If $a, b \in \Real$, let $a \vee b := \max\{a, b\}$ and $a \wedge b := \min\{a, b\}$. Hence, $|a| = a \vee (-a)$.
If $f, g \in \D F(X)$, let $f =_{\D F(X)} g :\TOT \forall_{x \in X}\big(f(x) =_{\Real} f(y)\big)$. 
If $f, g \in \mathbb{F}(X)$, $\varepsilon > 0$ and $\Phi
\subseteq \mathbb{F}(X)$, let
$$U(g, f, \varepsilon) :\TOT \forall_{x \in X}\big(|g(x) - f(x)| \leq \varepsilon\big),$$ 
$$U(\Phi, f) :\TOT \forall_{\varepsilon > 0}\exists_{g \in \Phi}\big(U(g, f, \varepsilon)\big).$$
A set $X$ is \textit{inhabited}, if it has an element. We denote by $\overline{a}^X$, or simply by $a$, the constant 
function on $X$ with value $a \in \Real$, and by $\Const(X)$ their set.


\begin{definition}\label{def: bishop}
A Bishop space is a pair $\C F := (X, F)$, where $X$ is an inhabited set and $F$ is an \textit{extensional} subset
of $\D F(X)$ i.e., $\forall_{f,g \in \D F(X)}\big([f \in F \ \& \ g =_{\D F(X)} f] \To g \in F\big)$,
such that the following conditions hold:\\[1mm]
$(\BS_1)$ $\Const(X) \subseteq F$.\\[1mm]
$(\BS_2)$ If $f, g \in F$, then $f + g \in F$.\\[1mm]
$(\BS_3)$ If $f \in F$ and $\phi \in \Bic(\Real)$, then $\phi \circ f \in F$.\\[1mm]
$(\BS_4)$ If $f \in \mathbb{F}(X)$ and $U(F, f)$, then $f \in F$.\\[1mm]
We call $F$ a Bishop topology on $X$. If $\C G := (Y, G)$ is a Bishop space, a Bishop morphism from $\C F$ to 
$\C G$ is a function $h : X \to Y$ such that $\forall_{g \in G}\big(g \circ h \in F\big)$. We denote by 
$\Mor(\C F, \C G)$ the set of Bishop morphisms from $\C F$ to $\C G$. If $h \in \Mor(\C F, \C G)$, we say that $h$ is 
\textit{open}, if $\forall_{f \in F}\exists_{g \in G}\big(f = g \circ h\big)$. If $h \in \Mor(\C F, \C G)$ is a bijection
and $h^{-1}$ is a Bishop morphism, we call $h$ a Bishop isomorphism.

\end{definition}

A Bishop morphism $h \in \Mor(\C F, \C G)$ is a ``continuous'' function from $\C F$ to $\C G$. If $h \in \Mor(\C F, \C G)$
is a bijection, then $h^{-1} \in \Mor(\C G, \C F)$ if and only if $h$ is open.
Let $\C R$ be the \textit{Bishop space of reals} $(\Real, \BR)$. It is easy to show that if $F$ is a topology on $X$, then 
$F = \Mor(\C F, \C R)$ i.e., an element of $F$ is a real-valued ``continuous'' function on $X$.
A Bishop topology $F$ on $X$ is an algebra and a lattice, where 
$f \vee g$ and $f \wedge g$ are defined pointwise, and $\Const(X) \subseteq F \subseteq \D F(X)$. If
$\D F^* (X)$ denotes the bounded elements of $\D F(X)$, then 
$F^* := F \cap \D F^*(X)$ is a Bishop topology on $X$. If $x =_X y$ is the given equality on $X$, a Bishop 
topology $F$ on $X$ \textit{separates the points} of $X$, or $F$ is \textit{separating} (see~\cite{Pe15a}), if 
$$\forall_{x, y \in X}\big[\forall_{f \in F}\big(f(x) =_{\Real} f(y)\big) \To x =_X y\big].$$
The \textit{canonical apartness relation} on $X$ induced by $F$ is defined by 
$$x \neq_F y :\TOT \exists_{f \in F}\big(f(x) \neq_{\Real} f(y)\big).$$

An apartness relation on $X$ is a positively defined inequality on $X$. E.g.,
if $a, b \in \Real$, then $a \neq_{\Real} b :\TOT |a - b| > 0$. In Proposition 5.1.2. of~\cite{Pe15} we show
that $a \neq_{\Real} b \TOT a \neq_{\BR} b$. 

\begin{definition}\label{def: base}
Turning the definitional clauses $(\BS_1)-(\BS_4)$ into inductive rules, 
the least topology $\VoF$ 
generated by a set $F_{0} \subseteq \mathbb{F}(X)$, called a subbase of 
$\VoF$, is defined by the following inductive rules:
 $$\frac{f_0 \in F_0}{f_0 \in \VoF}, \ \ \ \frac{f \in \VoF, \ g \in \D F(X), \ g =_{\D F(X)} f}{g \in \VoF},
\ \ \ \frac{a \in \Real}{\overline{a} \in \VoF}, \ \ \ \frac{f, g \in \VoF}{f + g \in \VoF},$$
 \begin{align*}
      \begin{aligned}
      \infer[]{f \in \VoF}{g_{1} \in \VoF \ \& \ 
      U(g_{1}, f, \frac{1}{2}), \ g_{2} \in \VoF \ \& \ 
      U(g_{2}, f, \frac{1}{2^{2}}), \ g_{3} \in \VoF \ \& \ U(g_{3}, f, \frac{1}{2^{3}}), \ldots}
      \end{aligned}.
  \end{align*}  
The above rules induce the corresponding induction principle $\Ind_{\VoF}$ on $\VoF$. 
\end{definition}

If $h \colon X \to Y$ and
$G = \bigvee G_0$, then one can show inductively i.e., with the use of $\Ind_{\bigvee G_0}$, that $h \in \Mor(\C F, \C G) \TOT 
\forall_{g_0 \in G_0}\big(g_0 \circ h \in F\big)$. We call this property the 
$\V$-\textit{lifting of morphisms}. 

\begin{definition}
If $\mathcal{F} = (X, F)$ and $\mathcal{G} = (Y, G)$ are given Bishop 
spaces, their \textit{product} is the structure $\mathcal{F} \times \mathcal{G} = 
(X \times Y, F \times G)$, where 
$$F \times G := \V \{f \circ \pi_{1} \mid f \in F\} \cup \{g \circ \pi_{2}
\mid g \in G\} := \V_{f \in F}^{g \in G} f \circ \pi_1, g \circ \pi_2,$$
and $\pi_{1}, \pi_{2}$ are the projections of $X \times Y$ to $X$ and $Y$, respectively. 
\end{definition}

It is straightforward to show that $\mathcal{F} \times \mathcal{G}$ satisfies the 
universal property for products and that $F \times G$ is the least topology which turns the 
projections $\pi_{1}, \pi_{2}$ into morphisms. If $F_{0}$ is a subbase of $F$ and $G_{0}$ is a subbase of 
$G$, then we show inductively that
$$\VoF \times \VoG = \V \{f_0 \circ \pi_{1} \mid f_0 \in F_0\} \cup \{g_0 \circ \pi_{2}
\mid g_0 \in G_0\} := \V_{f_0 \in F_0}^{g_0 \in G_0} f_0 \circ \pi_1, g_0 \circ \pi_2.$$
Consequently,  $\Bic(\mathbb{R}) \times \Bic(\mathbb{R}) = 
\V \id_{\mathbb{R}} \circ \pi_{1}, \id_{\mathbb{R}} \circ \pi_{2} = \V \pi_{1}, \pi_{2}$.

%
%


\begin{corollary}\label{crl: corprod3} Let $\mathcal{H} = (Z, H), \mathcal{F} = (X, F), \mathcal{G} = (Y, G)$
be Bishop spaces.\\[1mm]
\normalfont (i)
\itshape If $h_{1} : Z \rightarrow X$, $h_{2} : Z \rightarrow Y$, the map 
$h_1 \times h_2 : Z \rightarrow X \times Y,$ defined by
$z \mapsto (h_{1}(z), h_{2}(z)),$
is in $\Mor(\mathcal{H}, \mathcal{F} \times \mathcal{G})$
if and only if $h_{1} \in \Mor(\mathcal{H}, \mathcal{F})$ and $h_{2} \in \Mor(\mathcal{H}, \mathcal{G})$.\\[1mm]
\normalfont (ii)
\itshape
If $e_{1} : X \rightarrow Z$, $e_{2} : Y \rightarrow Z$, then the map
$e_1 \otimes e_2 : X \times Y \rightarrow Z \times Z,$ defined by $(x, y) \mapsto (e_{1}(x), e_{2}(y)),$
is in $\Mor(\mathcal{F} \times \mathcal{G}, \C H \times \C H)$
if and only if $e_{1} \in \Mor(\mathcal{F}, \mathcal{H})$ and $e_{2} \in \Mor(\mathcal{G}, \mathcal{H})$. 
\end{corollary}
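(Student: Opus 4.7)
The plan is to reduce both parts to the $\bigvee$-lifting of morphisms applied to the subbase $\{f \circ \pi_1 \mid f \in F\} \cup \{g \circ \pi_2 \mid g \in G\}$ of $F \times G$ that is displayed right before the corollary. For (i), I first record the two identities $\pi_1 \circ (h_1 \times h_2) = h_1$ and $\pi_2 \circ (h_1 \times h_2) = h_2$, immediate from $z \mapsto (h_1(z), h_2(z))$. By the $\bigvee$-lifting, $h_1 \times h_2 \in \Mor(\mathcal{H}, \mathcal{F} \times \mathcal{G})$ is equivalent to the conjunction of $(f \circ \pi_1) \circ (h_1 \times h_2) \in H$ for every $f \in F$ and $(g \circ \pi_2) \circ (h_1 \times h_2) \in H$ for every $g \in G$. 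Rewriting the two families with the identities above as $f \circ h_1 \in H$ and $g \circ h_2 \in H$ reads off precisely $h_1 \in \Mor(\mathcal{H}, \mathcal{F})$ and $h_2 \in \Mor(\mathcal{H}, \mathcal{G})$.

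For the forward direction of (ii), I would use that $F \times G$ is, by construction, the least topology making $\pi_1, \pi_2$ morphisms, so $\pi_1 \in \Mor(\mathcal{F} \times \mathcal{G}, \mathcal{F})$ and $\pi_2 \in \Mor(\mathcal{F} \times \mathcal{G}, \mathcal{G})$. If $e_1 \in \Mor(\mathcal{F}, \mathcal{H})$ and $e_2 \in \Mor(\mathcal{G}, \mathcal{H})$, then $e_1 \circ \pi_1$ and $e_2 \circ \pi_2$ are morphisms from $\mathcal{F} \times \mathcal{G}$ to $\mathcal{H}$ by composition, so part (i) applied with $Z := X \times Y$ and $h_i := e_i \circ \pi_i$ gives that $h_1 \times h_2$, which agrees pointwise with $e_1 \otimes e_2$, belongs to $\Mor(\mathcal{F} \times \mathcal{G}, \mathcal{H} \times \mathcal{H})$.

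For the reverse direction of (ii), I would use a section trick. Since $Y$ is inhabited as the underlying set of a Bishop space, I fix $y_0 \in Y$ and consider $\iota_1 : X \to X \times Y$, $\iota_1(x) := (x, y_0)$. Its two components are $\id_X$ and the constant function with value $y_0$; the latter is a morphism from $\mathcal{F}$ to $\mathcal{G}$ because for every $g \in G$ the pullback is the real constant $x \mapsto g(y_0)$ on $X$, which lies in $F$ by $(\BS_1)$. Hence by part (i), $\iota_1 \in \Mor(\mathcal{F}, \mathcal{F} \times \mathcal{G})$, and so $(e_1 \otimes e_2) \circ \iota_1 : x \mapsto (e_1(x), e_2(y_0))$ is a morphism from $\mathcal{F}$ to $\mathcal{H} \times \mathcal{H}$ by composition. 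A second application of part (i), combined with the fact that the constant $x \mapsto e_2(y_0)$ is trivially a morphism, extracts $e_1 \in \Mor(\mathcal{F}, \mathcal{H})$. The symmetric argument, using a point $x_0 \in X$ and $\iota_2 : y \mapsto (x_0, y)$, yields $e_2 \in \Mor(\mathcal{G}, \mathcal{H})$.

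The main obstacle is essentially bookkeeping: the whole argument is a single clean use of the $\bigvee$-lifting once one notes that projections are morphisms by construction of $F \times G$ and that sections through a fixed point are morphisms because constants between Bishop spaces are. The only nontrivial ingredient is the inhabitedness of the two factors needed for the sections in the reverse direction of (ii), and this is already built into the definition of a Bishop space, so no extra hypothesis is required.
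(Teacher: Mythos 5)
Your proof is correct, and it uses exactly the machinery the paper itself relies on: the $\V$-lifting of morphisms against the subbase $\{f \circ \pi_1\} \cup \{g \circ \pi_2\}$ of $F \times G$, plus the section maps $x \mapsto (x, y_0)$ that the paper establishes as morphisms in Proposition~\ref{prp: prod6}(i) (your re-derivation of this via part (i) and $(\BS_1)$ is fine, and your appeal to inhabitedness of the factors is legitimately covered by Definition~\ref{def: bishop}). The paper states the corollary without proof, deferring to~\cite{Pe15}, so there is nothing to contrast with; your argument is the standard one and fills that gap correctly.
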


%
%
%
%
%
%
%
%

\begin{proposition}\label{prp: prod6} Suppose that $\mathcal{F} = (X, F)$, $\mathcal{G} = (Y, G)$, $\mathcal{H} = (Z, H)$
are Bishop spaces, $x \in X, y \in Y$, $\phi: X \times Y \rightarrow \mathbb{R} \in F \times G$ and
$\Phi: X \times Y \rightarrow Z \in \Mor(\mathcal{F} \times \mathcal{G}, \mathcal{H})$.\\
\normalfont (i)
\itshape $i_{x}: Y \rightarrow X \times Y$, $y \mapsto (x, y)$, and $i_{y}: X \rightarrow X \times Y$,
$x \mapsto (x, y)$, are open morphisms.\\
\normalfont (ii)
\itshape $\phi_{x}: Y \rightarrow \mathbb{R}$, $y \mapsto \phi(x, y)$, and $\phi_{y}: X \rightarrow 
\mathbb{R}$, $x \mapsto \phi(x, y)$, are in $G$ and $F$, respectively.\\
\normalfont (iii)
\itshape $\Phi_{x}: Y \rightarrow Z$, $y \mapsto \Phi(x, y)$, and $\Phi_{y}: X \rightarrow Z$, $x \mapsto \Phi(x, y)$, 
are in $\Mor(\mathcal{G}, \mathcal{H})$ and $\Mor(\mathcal{F}, \mathcal{H})$, respectively.
\end{proposition}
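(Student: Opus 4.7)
The plan is to prove (i) first, then obtain (ii) and (iii) as immediate consequences via composition. For (i), I will apply the $\V$-lifting of morphisms to the subbase
$$(F \times G)_0 := \{f \circ \pi_1 \mid f \in F\} \cup \{g \circ \pi_2 \mid g \in G\}$$
of $F \times G$. To show $i_x \in \Mor(\C G, \C F \times \C G)$, it suffices to verify that $(f \circ \pi_1) \circ i_x \in G$ and $(g \circ \pi_2) \circ i_x \in G$ for all $f \in F$ and $g \in G$. The first composite sends $y$ to $f(x)$, i.e.\ it equals the constant function $\overline{f(x)}^{Y}$, hence lies in $G$ by $(\BS_1)$; the second composite sends $y$ to $g(y)$, hence equals $g \in G$. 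The case of $i_y$ is symmetric.

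For the openness of $i_x$, given any $g \in G$ I take $\phi := g \circ \pi_2 \in F \times G$ (a subbase element) and observe that $\phi \circ i_x(y) = g(\pi_2(x,y)) = g(y)$, so $g = \phi \circ i_x$. Symmetrically, openness of $i_y$ follows by witnessing any $f \in F$ as $f = (f \circ \pi_1) \circ i_y$.

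For (ii), I use that $F \times G = \Mor(\C F \times \C G, \C R)$, so $\phi \in \Mor(\C F \times \C G, \C R)$. Since $\phi_x = \phi \circ i_x$ and $i_x \in \Mor(\C G, \C F \times \C G)$ by part (i), composition of morphisms yields $\phi_x \in \Mor(\C G, \C R) = G$. The assertion about $\phi_y$ is analogous, using $i_y$.

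For (iii), the same composition argument works verbatim: $\Phi_x = \Phi \circ i_x \in \Mor(\C G, \C H)$ and $\Phi_y = \Phi \circ i_y \in \Mor(\C F, \C H)$, since the composition of Bishop morphisms is a Bishop morphism. The only step requiring genuine verification is (i); everything else is formal. I do not anticipate a serious obstacle, as the only real computation is the collapse of $\pi_1 \circ i_x$ to a constant and of $\pi_2 \circ i_x$ to the identity on $Y$, which is immediate from the definitions of the injections.
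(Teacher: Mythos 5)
Your proposal is correct and follows essentially the same route as the paper: the $\V$-lifting of morphisms applied to the subbase $\{f \circ \pi_{1} \mid f \in F\} \cup \{g \circ \pi_{2} \mid g \in G\}$, the observation that one composite collapses to a constant and the other to the original function (which simultaneously yields openness), and then (ii) and (iii) by composition with the injections. The paper merely writes out the symmetric case $i_{y}$ instead of $i_{x}$; there is no substantive difference.
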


\begin{proof}
(i) We show it only for $i_{y}$. By the $\Least$-lifting of morphisms we have that
$i_{y} \in \Mor(\mathcal{F}, \mathcal{F} \times \mathcal{G}) \TOT \forall_{f \in F}((f \circ \pi_{1}) 
 \circ i_{y} \in F) \ \& \  \forall_{g \in G}((g \circ \pi_{2}) \circ i_{y} \in F)$.
If $f \in F$, then $(f \circ \pi_{1}) \circ i_{y} = f$, which shows also that $i_{y}$ is open, while if
$g \in G$, then $(g \circ \pi_{2}) \circ i_{y} = \overline{g(y)} \in F$.\\
(ii) We show it only for $\phi_{y}$. We have that $\phi_{y} = \phi \circ i_{y}$, since $(\phi \circ i_{y})(x) = 
\phi(x, y) = \phi_{y}(x)$, for each $x \in X$. Since $i_{y} \in \Mor(\mathcal{F}, \mathcal{F} \times \mathcal{G})$
and $\phi \in F \times G$, we get that $\phi \circ i_{y} = \phi_{y} \in F$.\\
(iii) The proof is similar to the proof of (ii), Actually, (ii) is a special case of (iii).
\end{proof}

\section{The neighborhood structure of a Bishop topology}
\label{sec: nbhd}

If $F$ is a Bishop topology on $X$, the neighborhood structure on $X$ induced by $F$ is the family $(U(f)_{f \in F}$, where
$U(f) := \{x \in X \mid f(x) > 0\}$. The covering condition $(\NS_1)$ follows from the equality
$U(\overline{\alpha}^X) = X$, where $a > 0$, and the neighborhood-condition $(\NS_2)$ follows from the equality 
$U(f) \cap U(g) = U(f \wedge g)$, for every $f, g \in F$. Consequently, $O \subseteq X$ is \textit{open} if
$$\forall_{x \in O}\exists_{f \in F}\big(f(x) > 0 \ \& \ U(f) \subseteq O\big),$$
and $C \subseteq X$ is \textit{closed}, if $\forall_{x \in X}\big(x \in \overline{C} \To x \in C\big)$, where
$$x \in \overline{C}  :\TOT \forall_{f \in F}\big(f(x) > 0  \To \exists_{c \in C}\big(f(c) > 0\big)\big).$$

\begin{proposition}\label{prp: top1} Let $\mathcal{F} = (X, F)$, $\mathcal{G} = (Y, G)$ 
be Bishop spaces, $f \in F$ and $h: X \rightarrow Y$.\\[1mm]
\normalfont (i)
\itshape $\mathcal{O}$ is open in $N(\Bic(\Real))$ if and only if it is open in the standard topology on $\Real$.\\[1mm]
\normalfont (ii)
\itshape If $h \in \Mor(\mathcal{F}, \mathcal{G})$, then $h$ is neighborhood-continuous.\\[1mm]
\normalfont (iii)
\itshape If $h \in \Mor(\mathcal{F}, \mathcal{G})$, the inverse image of a closed set in $Y$ under $h$ is closed 
in $X$.\\[1mm]
\normalfont (iv)
\itshape If $h \in \Mor(\mathcal{F}, \mathcal{G})$ and $A \subseteq X$, then $h(\overline{A}) \subseteq
\overline{h(A)}$.\\[1mm]
\normalfont (v)
\itshape The set-theoretic complement $X \setminus U(f)$ of $U(f)$ in $X$ is closed in $X$, for every $f \in F$.\\[1mm]
\normalfont (vi)
\itshape The zero set $\zeta(f) := \{x \in X \mid f(x) = 0\}$ is closed. 
\end{proposition}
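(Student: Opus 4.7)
The overall strategy is to unfold the function-theoretic definitions in each case and exploit that a morphism $h\in\Mor(\C F,\C G)$ pulls back any $g\in G$ to $g\circ h\in F$, together with the identity $h^{-1}(U(g))=U(g\circ h)$. Parts (i) and (ii) are matters of unpacking; (iii) and (iv) reduce, via the positive definition of $\overline{\,\cdot\,}$, to the same $g\circ h$ trick; and (v), (vi) exploit the positive definition of closedness to convert the negation appearing in a set-theoretic complement into a contradiction.

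For (i), the forward direction is immediate: every basic neighbourhood $U(\phi)=\{x:\phi(x)>0\}$ with $\phi\in\Bic(\Real)$ is a standard open, since Bishop-continuous real functions are continuous in the usual sense. For the converse, given a standard open $\mathcal{O}\subseteq\Real$ and $a\in\mathcal{O}$, choose $\delta>0$ with $(a-\delta,a+\delta)\subseteq\mathcal{O}$ and take $\phi(t):=\delta-|t-a|\in\Bic(\Real)$; then $\phi(a)=\delta>0$ and $U(\phi)=(a-\delta,a+\delta)\subseteq\mathcal{O}$. For (ii), given $g\in G$ we have $h^{-1}(U(g))=\{x\in X : g(h(x))>0\}=U(g\circ h)$, and $g\circ h\in F$ because $h\in\Mor(\C F,\C G)$, so $h^{-1}(U(g))$ is a basic open of $X$ and hence open.

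For (iii), let $C$ be closed in $Y$ and fix $x\in\overline{h^{-1}(C)}$. By the positive closedness of $C$ it suffices to prove $h(x)\in\overline{C}$, so take any $g\in G$ with $g(h(x))>0$. Since $g\circ h\in F$ and $(g\circ h)(x)>0$, the assumption $x\in\overline{h^{-1}(C)}$ yields $c\in h^{-1}(C)$ with $(g\circ h)(c)>0$; then $h(c)\in C$ and $g(h(c))>0$, as required. For (iv), let $y\in h(\overline{A})$, say $y=h(x)$ with $x\in\overline{A}$, and pick any $g\in G$ with $g(y)>0$. Again $g\circ h\in F$ and $(g\circ h)(x)>0$, so some $a\in A$ satisfies $g(h(a))>0$; hence $h(a)\in h(A)$ witnesses $y\in\overline{h(A)}$.

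For (v), let $x\in\overline{X\setminus U(f)}$ and suppose, for contradiction, that $f(x)>0$. Applying the defining clause of $\overline{X\setminus U(f)}$ to $f\in F$ itself yields some $c\in X\setminus U(f)$ with $f(c)>0$; but $c\in X\setminus U(f)$ means $\neg(f(c)>0)$, a contradiction. Hence $\neg(f(x)>0)$, i.e.\ $x\in X\setminus U(f)$. For (vi), put $\phi(t):=|t|\in\Bic(\Real)$, so $|f|=\phi\circ f\in F$ by $(\BS_3)$; then observe that $\zeta(f)=X\setminus U(|f|)$, using that for $a\in\Real$ one has $a=0\TOT\neg(|a|>0)$ (if $|a|\le 1/n$ for all $n$, then $|a|<\varepsilon$ for every $\varepsilon>0$, so $a=0$). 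Thus $\zeta(f)$ is closed by (v). The one delicate point, and the place a careful constructive eye is needed, is precisely this equivalence $a=0\TOT\neg(|a|>0)$ on $\Real$, which is what makes the passage from the positively defined $\zeta(f)$ to a set-theoretic complement of an $F$-basic open legitimate without invoking Markov's principle.
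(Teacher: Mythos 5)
Your proofs are correct and, as far as one can tell, coincide with the intended ones: the paper gives no in-text proof of this proposition, deferring entirely to Propositions 4.4 and 5.3.2 of \cite{Pe15}, and your unfoldings of the definitions --- the identity $h^{-1}(U(g)) = U(g \circ h)$ driving (ii)--(iv), the proof by contradiction of the negative membership statement in (v) (constructively harmless, since one is proving a negation), and the reduction of (vi) to (v) via $|f| = |\cdot| \circ f \in F$ --- are exactly the arguments the definitions force. The one spot to tidy is your justification of $a = 0 \TOT \neg(|a| > 0)$ in (vi): the clean route is Lemma 2.18 of \cite{BB85}, i.e.\ $\neg(b > 0) \To b \leq 0$ (which this paper itself invokes in the proof of Theorem~\ref{thm: cr}), applied to $b = |a|$ together with $|a| \geq 0$; your parenthetical instead argues the implication $\forall_{n}\big(|a| \leq \tfrac{1}{n}\big) \To a = 0$, which is not the direction that needs justifying.
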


\begin{proof}
For (i)-(v) see~\cite{Pe15}, Proposition 4.4, and for (vi) see~\cite{Pe15}, Proposition 5.3.2.
\end{proof}

Clearly, $C \subseteq X$ is closed if and only if $C = \overline{C}$, and the intersection of closed sets is closed.
The closure operator $A \mapsto \overline{A}$ is not topological, in the classical sense, as we cannot show constructively
that the union of two closed sets is closed, in general,closed (see also~\cite{BB85}, p.~79). 
If $A, B \subseteq X$ and $F$ a Bishop topology on $X$, then it is 
straightforward to show that (i) $A \subseteq \overline{A}$, (ii) $\overline{\overline{A}} \subseteq \overline{A}$, (iii) 
$A \subseteq B \To \overline{A} \subseteq \overline{B}$, and (iv) 
$\overline{A} \cup \overline{B} \subseteq \overline{A \cup B}$. The inverse inclusion 
$\overline{A \cup B} \subseteq \overline{A} \cup \overline{B}$ cannot be shown constructively.
%
If $F$ is a Bishop topology on $X$ and $C \subseteq X$, we define positively and through $F$ a stronger relation 
``$x$ is not in $C$'', where $x \in X$, by
$$x \notin_F C :\TOT \exists_{f \in F}\big(f \colon x \notin_F C\big),$$
$$f \colon x \notin_F C :\TOT f(x) > 0 \ \& \ \forall_{c \in C}\big(f(c) = 0\big).$$
Classically one can show that $F$ is always completely regular i.e., if $C$ is closed 
in $X$ and $x \notin C$, then $x \notin_F C$ (see~\cite{Pe15}, Proposition 3.7.6). Constructively we can show the following. 

\begin{theorem}\label{thm: cr}
Let $F$ be a Bishop topology on $X$ and $C$ closed in $X$. The $F$-complement  
$X \setminus_F C := \{x \in X \mid \exists_{f \in F}\big(f \colon x \notin_F C\}$ of $C$ in $X$
is the largest open set included in $X \setminus C$.  
\end{theorem}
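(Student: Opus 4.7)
The plan is to verify three properties of the set $X \setminus_F C$: (a) it is open, (b) it is contained in $X \setminus C$, and (c) it absorbs every open subset of $X \setminus C$. Points (a) and (b) are essentially definitional unpackings, while (c) is where the positive/constructive content lies and requires us to fabricate an honest ``separating function'' from the weaker data that $O$ is open.

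For (a), given $x \in X \setminus_F C$ with witness $f \in F$ such that $f(x) > 0$ and $f(c) = 0$ for all $c \in C$, the basic open neighborhood $U(f)$ contains $x$, and every $y \in U(f)$ has the \emph{same} $f$ as a witness (the condition $\forall_{c \in C}(f(c) = 0)$ does not mention $y$). Hence $U(f) \subseteq X \setminus_F C$, which is the definition of openness in the canonical neighborhood structure. For (b), if $x \in X \setminus_F C$ via $f$, then $f(x) > 0$; were $x \in C$, the witness condition would force $f(x) = 0$, contradicting $f(x) > 0$. Thus $\neg(x \in C)$, i.e., $x \in X \setminus C$.

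The crux is (c). Let $O$ be an open subset of $X$ with $O \subseteq X \setminus C$, and let $x \in O$. By openness, pick $f \in F$ with $f(x) > 0$ and $U(f) \subseteq O \subseteq X \setminus C$. For any $c \in C$, the inclusion forces $\neg(c \in U(f))$, i.e., $\neg(f(c) > 0)$, so $f(c) \leq 0$. This is too weak by itself: the definition of $x \notin_F C$ demands a function that \emph{equals} $0$ on $C$, not merely one that is non-positive there. The key move is to replace $f$ by its positive part $g := f \vee \overline{0}^X$. This $g$ belongs to $F$ either by the lattice closure of Bishop topologies or, more directly, by $(\BS_3)$ applied to the Bishop-continuous map $t \mapsto t \vee 0$ on $\Real$. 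Then $g(x) = f(x) \vee 0 = f(x) > 0$, and for every $c \in C$ we have $g(c) = f(c) \vee 0 = 0$ since $f(c) \leq 0$. Therefore $g$ witnesses $x \notin_F C$, giving $O \subseteq X \setminus_F C$.

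The main obstacle, then, is nothing deep but rather this constructive gap between ``$\neg(f(c) > 0)$ on $C$'' and ``$f(c) = 0$ on $C$''; bridging it via the positive-part trick is what makes the positive notion $X \setminus_F C$ behave as the genuine largest open subset of the set-theoretic complement, without invoking any form of excluded middle. Combining (a), (b), (c) yields the theorem.
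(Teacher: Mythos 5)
Your proposal is correct and follows essentially the same route as the paper's own proof: the same witness-reuse argument for openness, the same contradiction for the inclusion in $X \setminus C$, and the same key step of passing from $\neg(f(c) > 0)$ to $f(c) \leq 0$ via the constructively valid implication and then taking $f \vee \overline{0}^X$ as the honest witness. Nothing to add.
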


\begin{proof}
We show that $X \setminus_F C$ is an open set included in $X \setminus C$ 
and if $O$ is an open set included in $X \setminus C$, then $O \subseteq X \setminus_F C$.
First we show that $X \setminus_F C$ is open. If $x \in X \setminus_F C$, let $f \in F$ such that 
$f \colon x \notin_F C$. Clearly, $x \in U(f)$, and if $y \in X$ such that $y \in U(f)$, then $f \colon y \notin_F C$ 
i.e., $y \in X \setminus_F C$. Next we show that if 
$x \in X \setminus_F C$, then $x \in X \setminus C$. If $x \in C$, then $f(x) > 0$ and $f(x) = 0$, which is a contradiction. 
Suppose next that $O$ is an open set included in $X \setminus C$. We show that if $x \in O$, then $x \in X \setminus_F C$.
Since $O$ is open, there is $g \in F$ such that $g(x) > 0$ and $U(g) \subseteq O \subseteq X \setminus C$.
We have that $g(c) \leq 0$, for every $c \in C$, since if $g(c) > 0$, for some $c \in C$, then $c \in U(g)$, hence
$c \in X \setminus C$, which is a contradiction. By  the constructively valid implication $\neg(a > 0) \To a \leq 0$,
for every $a \in \Real$ (see~\cite{BB85}, Lemma 2.18), we get $g(c) \leq 0$. Since $g \vee \overline{0}^X \in F$, we
conclude that $g \vee \overline{0}^X \colon x \notin_F C$, hence $x \in X \setminus_F C$.
\end{proof}

Although we cannot show in general that $X \setminus C$ is open, and hence $X \setminus C = X \setminus_F C$,
we can replace this computationally dubious result by the computationally meaningful fact that 
$X \setminus_F C$ is the largest open set included in $X \setminus C$. The next result is used in the proofs 
of Theorem~\ref{thm: bnbh1}(ii) and Theorem~\ref{thm: bnbh2}(ii).

\begin{proposition}\label{prp: tight}
If $F$ is a Bishop topology on $X$, the following are equivalent:\\[1mm]
\normalfont (i)
\itshape $F$ separates the points of $X$.\\[1mm]
\normalfont (ii)
\itshape The inequality $\neq_F$ generated by $F$ is tight i.e., $\neg(x \neq_F y) \To x =_X y$, 
for every $x, y \in X$.\\[1mm]
\normalfont (iii)
\itshape The singleton $\{x\}$ is closed, for every $x \in X$.
\end{proposition}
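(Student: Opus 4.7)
The plan is to establish the cycle by proving the two equivalences (i) $\TOT$ (ii) and (i) $\TOT$ (iii) separately; each direction unpacks a definition and then invokes a small constructive fact, so no long chain of implications is needed. The central technical idea is that membership in the closure $\overline{\{x\}}$ can be tested by a single, carefully chosen function built from an arbitrary $f \in F$, and this is what links (iii) back to (i).

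For (i) $\TOT$ (ii), the argument is essentially a rewriting exercise using the constructive tightness of $\neq_\Real$, namely $\neg(a \neq_\Real b) \To a =_\Real b$ (which holds because $a \neq_\Real b :\TOT |a-b| > 0$ and $\neg(|a-b| > 0) \To |a-b| = 0$). From (i) to (ii): assume $\neg(x \neq_F y)$, push the negation inside the existential to obtain $\forall_{f \in F}\, \neg(f(x) \neq_\Real f(y))$, apply tightness of $\neq_\Real$ pointwise in $f$, and conclude by (i). The converse direction reverses the same steps.

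For (i) $\To$ (iii), let $x \in X$ be arbitrary and suppose $y \in \overline{\{x\}}$; I would like to conclude $y =_X x$. The key step is, for each $f \in F$, to consider the function $h_f := |f - \overline{f(x)}^X|$. By $(\BS_1)$, $(\BS_2)$ and $(\BS_3)$ applied to the Bishop-continuous map $t \mapsto |t - f(x)|$, we have $h_f \in F$, and $h_f(x) = 0$. If $h_f(y) > 0$, then by the definition of $y \in \overline{\{x\}}$ we would get $h_f(x) > 0$, contradicting $h_f(x) = 0$; hence $\neg(h_f(y) > 0)$, so $h_f(y) \leq 0$, so $|f(y)-f(x)| = 0$, i.e. $f(y) =_\Real f(x)$. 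Since $f \in F$ was arbitrary, (i) yields $y =_X x$, so $y \in \{x\}$.

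For (iii) $\To$ (i), assume $\forall_{f \in F}\, f(x) =_\Real f(y)$. I would verify directly that $y \in \overline{\{x\}}$: for any $f \in F$ with $f(y) > 0$, the witness $c := x$ lies in $\{x\}$ and $f(c) = f(x) = f(y) > 0$. By (iii), $\{x\}$ equals its closure, so $y \in \{x\}$, i.e. $y =_X x$. I expect the one subtle spot to be (i) $\To$ (iii), where one must resist the temptation to argue by ``testing with $f(y) > 0$ or $f(y) \leq 0$''; using $h_f = |f - \overline{f(x)}^X|$ sidesteps the need for any such case split and keeps the proof fully constructive.
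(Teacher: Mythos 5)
Your proposal is correct, but it is organised differently from the paper's proof, which establishes the single cycle (i)$\To$(ii)$\To$(iii)$\To$(i) rather than the two equivalences (i)$\TOT$(ii) and (i)$\TOT$(iii). The direction (i)$\To$(ii) and the direction (iii)$\To$(i) coincide with the paper's arguments essentially verbatim (negation pushed through the existential plus tightness of $\neq_{\Real}$ for the former; the witness $c := x$ for the latter). The genuine divergence is in proving that singletons are closed: the paper derives (iii) from (ii) by supposing $y \neq_F x$, normalising to some $g \in F$ with $g(y) = 1$ and $g(x) = 0$, obtaining a contradiction with the hypothesis $y \in \overline{\{x\}}$, and then invoking the tightness of $\neq_F$ to conclude $y =_X x$; you instead go directly from (i), testing $y \in \overline{\{x\}}$ against $h_f := |f - \overline{f(x)}^X| \in F$ for each $f$, using $\neg(a > 0) \To a \leq 0$ together with $h_f \geq 0$ to get $f(y) =_{\Real} f(x)$, and then applying separation. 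Both routes are constructively sound and of comparable length; yours localises the appeal to a stability principle at the level of $\Real$ (where $\neg(a>0) \To a \leq 0$ is Lemma 2.18 of Bishop--Bridges, already used elsewhere in the paper) and avoids the normalisation step $g(y)=1$, $g(x)=0$, while the paper's route makes the role of the tightness of $\neq_F$ --- the content of clause (ii) --- more visible as the bridge between (ii) and (iii). Your closing remark about avoiding a case split on the sign of $f(y)$ is well taken; the construction $h_f$ does exactly the right job.
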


\begin{proof}
(i)$\To$(ii) Let $\neg (x =_F y) :\TOT \neg [\exists_{f \in F}\big(f(x) \neq_{\Real} f(y)\big)]$, for some $x, y \in X$.
We show that $\forall_{f \in F}\big(f(x) =_{\Real} f(y)\big)$. Let $f \in F$ such that $f(x) \neq_{\Real} f(y)$. By our 
hypothesis on $x, y$ this is impossible, hence by the tightness of $\neq_{\Real}$ we conclude that $f(x) =_{\Real} f(y)$.\\
(ii)$\To$(iii) Let $x, y \in X$ such that $\forall_{f \in F}\big(f(y) > 0 \To f(x) > 0\big)$. We show that $y =_X x$, 
by showing that $\neg\big(y \neq_F x\big)$. Suppose that $y \neq_F x$ and, without loss of generality,
let $g \in F$ such that $g(y) = 1$ and $g(x) = 0$. By the hypothesis on $x$ we have that $g(y) > 0 \To g(x) > 0$,
and we get the required contradiction.\\
(iii)$\To$(i) Let $x, y \in X$ such that $\forall_{f \in F}\big(f(x) =_{\Real} f(y)\big)$. We show that 
$y \in \overline{\{x\}}$, hence $y = x$. Let $f \in F$ such that $f(y) > 0$. Since $f(x) = f(y)$, we get $f(x) > 0$.
\end{proof}


\begin{proposition}\label{prp: closed3}
 If $C$ is closed in $X$ and $D \subseteq Y$ is closed in $Y$, 
 $C \times D$ is closed in $X \times Y$.
\end{proposition}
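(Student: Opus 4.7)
The plan is to exhibit $C \times D$ as an intersection of two closed subsets of $X \times Y$, each obtained as the preimage of a closed set under a projection morphism. First, I would invoke the fact, noted in the discussion of the product Bishop space, that the projections
$$\pi_1 : X \times Y \to X, \qquad \pi_2 : X \times Y \to Y$$
are Bishop morphisms $\mathcal{F} \times \mathcal{G} \to \mathcal{F}$ and $\mathcal{F} \times \mathcal{G} \to \mathcal{G}$, since $F \times G$ is by definition the least topology turning both projections into morphisms.

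Then, by Proposition~\ref{prp: top1}(iii), the preimages
$$\pi_1^{-1}(C) = C \times Y, \qquad \pi_2^{-1}(D) = X \times D$$
are closed in $X \times Y$. Combining the set-theoretic identity $C \times D = (C \times Y) \cap (X \times D)$ with the remark (following Proposition~\ref{prp: top1}) that the intersection of closed sets is closed, one concludes that $C \times D$ is closed in $X \times Y$.

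I do not foresee any real obstacle. The argument avoids any passage through set-theoretic complements, uses no form of choice, and depends only on tools already in place in the paper. For the reader who prefers a direct unfolding of the definition, one can alternatively fix $(x,y) \in \overline{C \times D}$ and argue as follows: given $f \in F$ with $f(x) > 0$, the function $f \circ \pi_1$ lies in $F \times G$ and satisfies $(f \circ \pi_1)(x,y) = f(x) > 0$, so the hypothesis yields $(c,d) \in C \times D$ with $f(c) > 0$; this shows $x \in \overline{C} = C$, and the symmetric argument with $g \circ \pi_2$ for $g \in G$ with $g(y) > 0$ gives $y \in \overline{D} = D$. Both proofs fit cleanly within the positive, function-theoretic framework advocated in the paper.
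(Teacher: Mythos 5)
Your proposal is correct, and your primary argument takes a genuinely different (and slightly more economical) route than the paper's. You factor the statement through two lemmas already on record: the projections $\pi_1, \pi_2$ are Bishop morphisms out of $\C F \times \C G$ by the very definition of the product topology, so by Proposition~\ref{prp: top1}(iii) the preimages $\pi_1^{-1}(C) = C \times Y$ and $\pi_2^{-1}(D) = X \times D$ are closed, and the identity $C \times D = (C \times Y) \cap (X \times D)$ together with the remark that intersections of closed sets are closed finishes the proof. The paper instead argues directly from the definition of the closure: it fixes $(x,y) \in \overline{C \times D}$, takes $f \in F$ with $f(x) > 0$, feeds $f \circ \pi_1 \in F \times G$ into the hypothesis to produce $(u,w) \in C \times D$ with $f(u) > 0$, concludes $x \in \overline{C} = C$, and handles $y$ symmetrically --- which is precisely the ``alternative'' unfolding you sketch at the end, so you have in fact reproduced the paper's proof as well. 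What your preimage decomposition buys is reusability and brevity (no re-derivation of the closure condition); what the paper's direct computation buys is self-containedness and an explicit display of the algorithmic content, in keeping with its stated emphasis on extracting clear algorithms. Both are fully constructive and choice-free.
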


\begin{proof}
 Let $(x,y) \in \overline{C \times D}$ i.e., if $h(x,y) > 0$, there is $(u,w) \in C \times D$ such that $h(u,w) > 0$, 
 for every $h \in F \times G$. We show that $x \in \overline{C}$ and (similarly) $y \in \overline{D}$,
 hence $(x,y) \in C \times D$. Let $f \in F$ such that $f(x) > 0$. Since $(f \circ \pi_1)(x, y) > 0$ and $f \circ \pi_1 
 \in F \times G$, there is $(u,w) \in C \times D$ such that $(f \circ \pi_1)(u,w) := f(u) > 0$, hence there is $u \in C$ 
 with $f(u) > 0$.
 \end{proof}

\section{Bishop topological groups}
\label{sec: btg}

\begin{definition}\label{def: bgroup}
A Bishop topological group is a structure $\B {\C F} := (X, +, 0, - ; F)$, where $\C X := (X, +, 0, -)$
is a group and $\C F:= (X, F)$ is 
a Bishop space such that $+ : X \times X \to X \in \Mor(\C F \times \C F, \C F)$ and 
$- : X \to X \in \Mor(\C F, \C F)$. If necessary, we also use the notations $+^{\mathsmaller{X}}, 0^{\mathsmaller{X}}$,
and $-^{\mathsmaller{X}}$ for the operations of the group $\C X$. If $f \in F$, let 
$f_+ := f \circ +$ and $f_- := f \circ -$.
\end{definition}

By the definition of a Bishop morphism we get
$$+ \in \Mor(\C F \times \C F, \C F) :\TOT \forall_{f \in F}\big(f \circ + \in F \times F\big) 
:\TOT \forall_{f \in F}\big(f_+ \in F \times F\big),$$
$$- \in \Mor(\C F, \C F) :\TOT \forall_{f \in F}\big(f \circ - \in F \big) 
:\TOT \forall_{f \in F}\big(f_{-} \in F \big).$$

\begin{example}[The additive group of reals]\label{ex: realsplus}
\normalfont
The structure $\B {\C R} := (\Real, +, 0, -; \BR)$ is a Bishop topological group.
By the $\V$-lifting of morphisms $+ \in \Mor(\C R \times \C R, \C R) \TOT \id_{\Real} \circ + \in \BR \times \BR.$
If $x, y \in \Real$, then
$\big(\id_{\Real} \circ +\big)(x, y) := x + y := (\pi_1 + \pi_2)(x, y)$
i.e., $\id_{\Real} \circ + = \pi_1 + \pi_2 \in \BR \times \BR = \V \pi_1, \pi_2$. Similarly, 
$- \in \Mor(\C R, \C R) \TOT \id_{\Real} \circ - \in \BR$. If $x \in \Real$, then 
$\big(\id_{\Real} \circ -\big)(x) := -x := - \id_{\Real}(x)$
i.e., $\id_{\Real} \circ - =  - \id_{\Real} \in \BR$.
\end{example}

\begin{example}[The trivial Bishop topological group]\label{ex: trivial}
If $\C X := (X, +, 0, -)$ is a group, then $\Const(X)$ is the \textit{trivial} Bishop topology on $X$. 
If $a \in \Real$, then $\overline{a}^X \circ + = \overline{a}^{X \times X} \in \Const(X \times X)
= \Const(X) \times \Const(X)$, and $\overline{a}^X \circ - = \overline{a}^{X} \in \Const(X)$.
\end{example}

Unless otherwise stated, from now on, $X, Y$ \textit{are Bishop topological groups with  
$F$ and $G$ Bishop topologies on $X$ and $Y$, respectively}.

\begin{proposition}\label{prp: bgroup3}
\normalfont (i)
\itshape The function $- : X \to X$ is a Bishop isomorphism.\\[1mm]
\normalfont (ii)
\itshape For every $x_0 \in X$ the functions $+_{x_0}^1, +_{x_0}^2 : X \to X$, defined by  
$+_{x_0}^1 (x) := x_0 + x$ and $+_{x_0}^2(x) := x + x_0,$
for every $x \in X$, are Bishop morphisms.\\[1mm]
\normalfont (iii)
\itshape If $f \in F$, the functions $f_{x_0}^1, f_{x_0}^2 : X \to \Real$, defined by
$f_{x_0}^1 (x) := f(x_0 + x)$ and $f_{x_0}^2(x) := f(x + x_0),$
for every $x \in X$, are in $F$.\\[1mm]
\normalfont (iv)
\itshape For every $x_0 \in X$ the functions $+_{x_0}^1, +_{x_0}^2 : X \to X$ are Bishop isomorphisms.
\end{proposition}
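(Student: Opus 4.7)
The plan is to exploit two facts already in hand: the operations $-$ and $+$ are Bishop morphisms by the very definition of a Bishop topological group, and Proposition~\ref{prp: prod6} tells us that fixing a coordinate of a morphism out of a product yields a morphism on the remaining factor. The group axioms then supply the explicit inverses needed in (i) and (iv).

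For (i), since $-$ satisfies $(-) \circ (-) = \id_X$ by the group axioms, it is a bijection and coincides with its own inverse. As $-$ is a Bishop morphism by hypothesis, $-^{-1} = -$ is also a Bishop morphism, and hence $-$ is a Bishop isomorphism. For (ii), I would observe that $+_{x_0}^1$ is the section $x \mapsto +(x_0, x)$ of $+ \in \Mor(\C F \times \C F, \C F)$ at $x_0$ in the first coordinate, which Proposition~\ref{prp: prod6}(iii) (applied with $\Phi = +$, $\C G = \C F$, $\C H = \C F$) places in $\Mor(\C F, \C F)$; the analogous argument fixing the second coordinate gives $+_{x_0}^2 \in \Mor(\C F, \C F)$.

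Part (iii) is then an immediate consequence of (ii): $f_{x_0}^i = f \circ +_{x_0}^i$, and since $F = \Mor(\C F, \C R)$ and $+_{x_0}^i \in \Mor(\C F, \C F)$ by (ii), the composition lies in $F$. For (iv), I would exhibit the inverse of $+_{x_0}^1$ explicitly as $+_{-x_0}^1$, using associativity together with the group axiom $(-x_0) + x_0 = 0 = x_0 + (-x_0)$ to verify both $+_{-x_0}^1 \circ +_{x_0}^1 = \id_X$ and $+_{x_0}^1 \circ +_{-x_0}^1 = \id_X$; part (ii) then says $+_{-x_0}^1$ is itself a Bishop morphism, so $+_{x_0}^1$ is a Bishop isomorphism, and symmetrically $+_{x_0}^2$ has inverse $+_{-x_0}^2$.

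No step here is a serious obstacle: the proposition is essentially a short bookkeeping exercise combining Proposition~\ref{prp: prod6} with the group axioms. All the inverse functions are given by explicit formulas in terms of $+$ and $-$, so the argument is manifestly constructive and free of any appeal to countable choice, in line with the methodological standpoint emphasised in the introduction.
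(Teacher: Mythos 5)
Your proof is correct and follows essentially the same route as the paper: parts (ii) and (iii) come from Proposition~\ref{prp: prod6}(iii) (which the paper unwinds explicitly via the section $i_{x_0}^1$ and composition of morphisms), and the group axioms supply the inverses. The only cosmetic difference is that for (i) and (iv) you conclude by exhibiting the set-theoretic inverse ($-$ itself, resp.\ $+_{-x_0}^1$) as a Bishop morphism, whereas the paper verifies that the maps are open; these are interchangeable by the remark after Definition~\ref{def: bishop} that a bijective Bishop morphism has a Bishop-morphism inverse iff it is open.
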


\begin{proof}
(i) By definition $- \in \Mor(\C F, \C F)$, and it is a bijection. It is also open i.e., $\forall_{f \in F}\exists_{g \in F}
\big(f = g_-\big)$. If $f \in F$, we have that $f = (f_-)_-$ and $f_- \in F$.\\
(ii) and (iii) If $i_{x_0}^1 : X \to X \times X$ is defined by $i_{x_0}^1(x) := (x_0, x)$, for every $x \in X$, then 
$i_{x_0}^1 \in \Mor(\C F, \C F \times \C F)$ and $+_{x_0}^1 := + \circ i_{x_0}^1 \in \Mor(\C F, \C F)$ as a composition of 
Bishop morphisms
\begin{center}
\begin{tikzpicture}

\node (E) at (0,0) {$X$};
\node[right=of E] (F) {$X \times X$};
\node[right=of F] (A) {$X$};
\node[right=of A] (B) {$\Real$.};

\draw[->] (E)--(F) node [midway,above] {$\ \ \ \ \ \mathsmaller{i_{x_0}^1}$};
\draw[->] (F)--(A) node [midway,above] {$+$};
\draw[->] (A)--(B) node [midway,above] {$\mathsmaller{f} \ \ $};
\draw[->,bend right] (E) to node [midway,below] {$+_{x_0}^1$} (A) ;
\draw[->,bend left] (E) to node [midway,above] {$f_{x_0}^1$} (B) ;

\end{tikzpicture}
\end{center}
$f_{x_0}^1 = f \circ +_{x_0}^1 \in F$, as a composition of Bishop morphisms. For $+_{x_0}^2$ we work similarly.\\
(iv) Clearly, $+_{x_0}^1$ is a bijection. It is also open, since for every $f \in F$ we have that 
$f = f_{-x_0}^1 \circ +_{x_0}^1,$
and by (iv) $f_{-x_0}^1 \in F$. For $f_{x_0}^2$ we work similarly.
\end{proof}

\begin{proposition}\label{prp: bgroup4}
The function $k : X \times X \to X \times X$, defined by
$k(x, y) := (x, -y)$, for every $(x, y) \in X \times X$, is a Bishop isomorphism.
\end{proposition}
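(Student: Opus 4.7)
The plan is to recognize $k$ as an instance of the product construction $e_1 \otimes e_2$ from Corollary~\ref{crl: corprod3}(ii) and then exploit the fact that $k$ is its own inverse.

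First I would verify that $k$ is a bijection by a direct calculation: for every $(x,y) \in X \times X$ one has $(k \circ k)(x,y) = k(x,-y) = (x, -(-y)) = (x,y),$ using that $-$ is an involution in the group $\C X$. Hence $k \circ k = \id_{X \times X}$, so $k$ is a bijection and $k^{-1} = k$.

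Next I would show that $k \in \Mor(\C F \times \C F, \C F \times \C F)$ by observing that $k = \id_X \otimes (-)$ in the notation of Corollary~\ref{crl: corprod3}(ii), since $(\id_X \otimes -)(x,y) = (\id_X(x), -(y)) = (x,-y) = k(x,y)$. By that corollary, $k$ is a Bishop morphism from $\C F \times \C F$ to $\C F \times \C F$ if and only if $\id_X \in \Mor(\C F, \C F)$ and $- \in \Mor(\C F, \C F)$. The identity is trivially a Bishop morphism (for every $f \in F$, $f \circ \id_X = f \in F$), and $-$ is a Bishop morphism by the defining condition of the Bishop topological group $X$.

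Finally, since $k^{-1} = k$, the inverse is automatically a Bishop morphism by the same argument, so $k$ is a Bishop isomorphism. There is no real obstacle here; the only thing to notice is the correct parsing of $k$ as $\id_X \otimes (-)$, which reduces the whole statement to the already-established morphism properties of $\id_X$ and $-$ together with the involutivity of the group inverse.
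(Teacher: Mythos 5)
Your proof is correct, but it reaches the morphism property of $k$ by a different route than the paper. The paper unfolds the $\V$-lifting of morphisms and computes directly that $(f \circ \pi_1) \circ k = f \circ \pi_1 \in F \times F$ and $(f \circ \pi_2) \circ k = f_- \circ \pi_2 \in F \times F$ (the latter using $f_- \in F$, i.e., that $-$ is a Bishop morphism), which makes the witnessing elements of $F \times F$ explicit. You instead recognize $k$ as $\id_X \otimes (-)$ and invoke Corollary~\ref{crl: corprod3}(ii), which reduces everything to $\id_X, - \in \Mor(\C F, \C F)$. Since that corollary is exactly the general statement whose special case the paper recomputes by hand, your argument is a legitimate shortcut: it is more modular and shorter, at the cost of leaning on a result the paper imports from \cite{Pe15} without proof, whereas the paper's computation is self-contained and exhibits the concrete functions involved. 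The treatment of bijectivity and of the inverse is identical in both proofs: $k \circ k = \id_{X \times X}$ by involutivity of the group inverse, so $k^{-1} = k$ is a Bishop morphism for free.
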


\begin{proof}
Clearly, $k$ is a bijection. By the $\V$-lifting of morphisms we have that
$k \in \Mor(\C F \times \C F, \C F \times \C F)$ if and only if $(f \circ \pi_1) \circ k \in F \times F$ and
$(f \circ \pi_2) \circ k \in F \times F\big)$, for every $f \in F$.
Let $f \in F$ and $(x, y) \in X \times X$. Since
$[(f \circ \pi_1) \circ k](x, y) := (f \circ \pi_1)(x, -y) := f(x) := (f \circ \pi_1)(x, y),$
we get $(f \circ \pi_1) \circ k = f \circ \pi_1 \in F \times F$. Moreover, 
$[(f \circ \pi_2) \circ k](x, y) := (f \circ \pi_2)(x, -y) := f(-y) := f_-(y) := (f_- \circ \pi_2)(x, y),$
i.e., $(f \circ \pi_2) \circ k = (f_- \circ \pi_2) \in F \times F$, since $f_- \in F$. Since $(k \circ k)(x, y) := 
(x, -(-y)) = (x, y)$, $k$ is its own inverse, hence $k$ is a Bishop isomorphism.
\end{proof}

\begin{proposition}\label{prp: bgroup5}
Let $\C X := (X, +, 0, -)$ be a group, $F$ a Bishop topology on $X$, and $\sub : X \times X \to
X$ be defined by $\sub(x, y) := x - y,$ for every $(x, y) \in X \times X$. Then 
$\B {\C F} := (X, +, 0, - ; F)$ is a Bishop topological group if and only if 
$\sub \in \Mor(\C F \times \C F, \C F)$.
\end{proposition}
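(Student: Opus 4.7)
The plan is to prove both implications by exhibiting $\sub$, $-$, and $+$ as compositions of Bishop morphisms already established (or soon to be established) as such, relying on the map $k : X \times X \to X \times X$, $(x,y) \mapsto (x,-y)$ studied in Proposition~\ref{prp: bgroup4}.

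For the forward direction, assume $\B {\C F}$ is a Bishop topological group. Since $\sub(x,y) = x + (-y) = +(k(x,y))$ for every $(x,y) \in X \times X$, we have the factorization $\sub = + \circ k$. By Proposition~\ref{prp: bgroup4}, $k \in \Mor(\C F \times \C F, \C F \times \C F)$, and by hypothesis $+ \in \Mor(\C F \times \C F, \C F)$, so their composition $\sub$ is a Bishop morphism from $\C F \times \C F$ to $\C F$.

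For the converse, assume $\sub \in \Mor(\C F \times \C F, \C F)$. I first establish that $- \in \Mor(\C F, \C F)$. Note that $-x = 0 - x = \sub(0,x)$, so $- = \sub \circ i_0^1$, where $i_0^1 : X \to X \times X$ is the map $x \mapsto (0,x)$. By Proposition~\ref{prp: prod6}(i), $i_0^1 \in \Mor(\C F, \C F \times \C F)$, so $-$ is a Bishop morphism as a composition of Bishop morphisms.

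With $- \in \Mor(\C F, \C F)$ available, I then treat $+$. Since $x + y = x - (-y) = \sub(x, -y)$ for every $(x,y) \in X \times X$, we have $+ = \sub \circ k$, where $k(x,y) := (x,-y)$ as before. But $k = \id_X \otimes (-)$, so by Corollary~\ref{crl: corprod3}(ii), the fact that $\id_X$ and $-$ are both Bishop morphisms from $\C F$ to $\C F$ yields $k \in \Mor(\C F \times \C F, \C F \times \C F)$. Composing with $\sub$ gives $+ \in \Mor(\C F \times \C F, \C F)$, completing the proof. There is no real obstacle here; the only subtlety is the order of the argument in the converse direction, where $-$ must be handled before $+$, since the factorization of $+$ through $\sub$ uses $-$.
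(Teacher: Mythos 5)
Your proof is correct and follows essentially the same route as the paper: both directions factor the relevant operation through the map $k(x,y) := (x,-y)$, with $\sub = + \circ k$ for the forward direction and $- = \sub(0,\cdot)$, $+ = \sub \circ k$ for the converse. Your explicit care in establishing $- \in \Mor(\C F, \C F)$ \emph{before} invoking $k$ in the converse is in fact a small improvement in exposition over the paper, which states $+ = \sub \circ k$ first and only afterwards derives $-$ via Proposition~\ref{prp: prod6}(iii), even though the morphism property of $k$ rests on that of $-$.
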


\begin{proof}
If $\B {\C F}$ is a Bishop topological group, then $\sub = + \circ k \in \Mor(\C F \times \C F, \C F)$
\begin{center}
\begin{tikzpicture}

\node (E) at (0,0) {$X \times X$};
\node[right=of E] (F) {$X \times X$};
\node[above=of F] (A) {};
\node[right=of A] (B) {$X$};
\node[below=of F] (C) {};
\node[right=of C] (D) {$X$.};

\draw[->] (E)--(F) node [midway,above] {$k$};
\draw[->] (F)--(B) node [midway,left] {$+$};
\draw[->] (F)--(D) node [midway,right] {$\sub$};
\draw[->,bend left] (E) to node [midway,above] {$\sub$} (B) ;
\draw[->,bend right] (E) to node [midway,below] {$+$} (D) ;

\end{tikzpicture}
\end{center}
For the converse, notice that $+ = \sub \circ k \in \Mor(\C F \times \C F, \C F)$.
By Proposition~\ref{prp: prod6}(iii) we get $- = \sub_0 \in \Mor(\C F, \C F)$, where $\sub_0(y) := \sub(0, y) :=
-y$, for every $y \in X$.
\end{proof}

\begin{proposition}\label{prp: bgroup6}
Let $\B {\C F} := (X, +, 0, - ; F)$ be a Bishop topological group and $\C G := (Y, G)$ a Bishop space.
Let the functions $+^{\mathsmaller{\mathsmaller{\to}}} : \Mor(\C G, \C F) \times \Mor(\C G, \C F) \to \Mor(\C G, \C F)$, 
$-^{\mathsmaller{\mathsmaller{\to}}} : \Mor(\C G, \C F) \to \Mor(\C G, \C F)$, and $0^{\mathsmaller{\to}} : Y \to X$
be defined by
$(h_1 +^{\mathsmaller{\to}} h_2)(y) := h_1(y) + h_2(y)$, $(-^{\mathsmaller{\to}}h)(y) := -h(y)$, and
$0^{\mathsmaller{\to}}(y) := 0$, for every $y \in Y$, and $h_1, h_2, h \in \Mor(\C G, \C F)$.
Then $\big(\Mor(\C G, \C F), +^{\mathsmaller{\to}}, 0^{\mathsmaller{\to}}, -^{\mathsmaller{\to}}\big)$
is a group.
\end{proposition}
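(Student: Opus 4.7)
The plan is to split the verification into two parts: first, check that $+^{\mathsmaller{\to}}$, $-^{\mathsmaller{\to}}$ and $0^{\mathsmaller{\to}}$ actually land in $\Mor(\C G, \C F)$, so the algebraic structure makes sense; second, derive the group axioms pointwise from the group axioms of $\C X$. The former is where the Bishop-topological-group hypothesis on $\B{\C F}$ is essentially used, while the latter is purely algebraic bookkeeping.

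For closure under the three operations I would argue as follows. The constant map $0^{\mathsmaller{\to}}$ is a Bishop morphism from $\C G$ to $\C F$, because for every $f \in F$ the composite $f \circ 0^{\mathsmaller{\to}}$ is the constant function $\overline{f(0)}^{Y}$, which belongs to $G$ by axiom $(\BS_1)$. For $h \in \Mor(\C G, \C F)$, the map $-^{\mathsmaller{\to}} h = -^{\mathsmaller{X}} \circ h$ is a composition of Bishop morphisms, since $-^{\mathsmaller{X}} \in \Mor(\C F, \C F)$ by the Bishop-topological-group structure on $X$. For $h_1, h_2 \in \Mor(\C G, \C F)$, Corollary~\ref{crl: corprod3}(i) yields $h_1 \times h_2 \in \Mor(\C G, \C F \times \C F)$, and then
$$h_1 +^{\mathsmaller{\to}} h_2 \;=\; +^{\mathsmaller{X}} \circ (h_1 \times h_2) \;\in\; \Mor(\C G, \C F),$$
again by composition of morphisms, using $+^{\mathsmaller{X}} \in \Mor(\C F \times \C F, \C F)$.

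Once closure is in place, the group axioms are immediate. Pointwise evaluation at $y \in Y$ turns associativity of $+^{\mathsmaller{\to}}$ into associativity of $+^{\mathsmaller{X}}$, the identity laws into $h(y) +^{\mathsmaller{X}} 0^{\mathsmaller{X}} = h(y) = 0^{\mathsmaller{X}} +^{\mathsmaller{X}} h(y)$, and the inverse laws into $h(y) +^{\mathsmaller{X}} (-^{\mathsmaller{X}} h(y)) = 0^{\mathsmaller{X}} = (-^{\mathsmaller{X}} h(y)) +^{\mathsmaller{X}} h(y)$. In each case one uses only the group axioms of $\C X$ applied at the single element $h(y) \in X$.

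I do not expect a serious obstacle: the one step with any content is the well-definedness of $+^{\mathsmaller{\to}}$, which is precisely where Corollary~\ref{crl: corprod3}(i) is combined with the defining hypothesis $+^{\mathsmaller{X}} \in \Mor(\C F \times \C F, \C F)$ of $\B{\C F}$. Everything else is pointwise bookkeeping.
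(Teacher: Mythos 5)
Your proof is correct. The paper states Proposition~\ref{prp: bgroup6} without proof, treating it as routine, and your argument --- well-definedness of the operations via Corollary~\ref{crl: corprod3}(i) combined with the defining hypotheses $+ \in \Mor(\C F \times \C F, \C F)$ and $- \in \Mor(\C F, \C F)$, followed by a pointwise verification of the group axioms --- is exactly the standard argument the paper implicitly relies on when it invokes this proposition later (e.g.\ in the proof of Lemma~\ref{lem: corbgroup6}(i)).
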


One can show that the group
$\big(\Mor(\C G, \C F), +^{\mathsmaller{\to}}, 0^{\mathsmaller{\to}}, -^{\mathsmaller{\to}}\big)$,
equipped with the pointwise exponential Bishop topology (see~\cite{Pe15}, section 4.3), is a Bishop topological group.
Notice that if $Y$ has also a group structure compatible
with $G$, and if $h_1, h_2, h$ are group homomorphisms, then $h_1 +^{\mathsmaller{\to}} h_2, -^{\mathsmaller{\to}}h$ and
$0^{\mathsmaller{\to}}$ are also group homomorphisms.

\begin{definition}\label{def: BTopGrp}
Let $\B {\C F} := (X, +^{\mathsmaller{X}}, 0^{\mathsmaller{X}}, -^{\mathsmaller{X}} ; F)$ and 
$\B {\C G} := (Y, +^{\mathsmaller{Y}}, 0^{\mathsmaller{Y}}, -^{\mathsmaller{Y}} ; G)$ be Bishop topological groups. If
$h \in \Mor(\C G, \C F)$ such that $h$ is a $(\C X, \C Y)$-group homomorphism, then we call $h$ a \textit{Bishop group
homomorphism}, or simpler, a \textit{Bishop homomorphism}. We denote by $\MOR(\B {\C F}, \B {\C G})$ the set of all Bishop
homomorphisms from $\B {\C F}$ to $\B {\C G}$. Let $\BTopGrp$ be the category of Bishop topological
groups with Bishop group homomorphisms. 
\end{definition}

\begin{proposition}\label{prp: homR}
Let $a \in \Real$ and let $h_a : \Real \to \Real$ be defined by $h_a(x) = ax$, for every $x \in \Real$. Then 
$h_a \in \MOR(\B {\C R}, \B {\C R})$. Conversely, if $h \in \MOR(\B {\C R}, \B {\C R})$, there is $a \in \Real$ such that 
$h = h_a$.
\end{proposition}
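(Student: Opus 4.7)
The plan is to handle the two directions separately, with the forward direction being essentially routine and the converse requiring an extension argument from rationals to reals.

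For the forward direction, I would first verify that $h_a$ is a Bishop morphism from $\C R$ to $\C R$, i.e., $h_a \in \BR$. Since $h_a(x) = ax$ is a polynomial in $x$, and polynomials lie in $\BR$ (they are built from $\id_\Real$, constants, and the algebraic operations closed under $\BR$), this is immediate. I would then check the group homomorphism property: $h_a(x + y) = a(x + y) = ax + ay = h_a(x) + h_a(y)$ and $h_a(0) = 0$. Hence $h_a \in \MOR(\B{\C R}, \B{\C R})$.

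For the converse, suppose $h \in \MOR(\B{\C R}, \B{\C R})$. Set $a := h(1)$. Since $h$ is a group homomorphism of the additive group $(\Real, +, 0, -)$, I would first establish by induction that $h(n) = na$ for every $n \in \Integ$ (using $h(n+1) = h(n) + h(1)$ and $h(-n) = -h(n)$). Next, for any nonzero $q \in \Nat$ and $p \in \Integ$, the equation $q \cdot h(p/q) = h(p) = pa$ yields $h(p/q) = (p/q)\,a$, so that $h(r) = ar = h_a(r)$ for every rational $r$.

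It remains to promote this agreement on $\Rat$ to all of $\Real$, using that $h$ is a Bishop morphism. Consider the function $\phi := h - h_a$, which lies in $\BR$ since $\BR$ is closed under sums and negation; moreover $\phi(r) = 0$ for every $r \in \Rat$. Fix $x \in \Real$ and $\varepsilon > 0$. As elements of $\BR$ are uniformly continuous on every compact interval, there is $\delta > 0$ such that $|y - x| < \delta$ implies $|\phi(y) - \phi(x)| < \varepsilon$ for $y \in [x-1, x+1]$. Picking a rational $q$ with $|q - x| < \delta$ (which exists by density of $\Rat$ in $\Real$, constructively available since any real is represented by a regular sequence of rationals) gives $|\phi(x)| = |\phi(x) - \phi(q)| < \varepsilon$. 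Since $\varepsilon > 0$ was arbitrary, $\phi(x) = 0$, whence $h(x) = ax = h_a(x)$, proving $h = h_a$.

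The main obstacle, though minor, is ensuring that the final step from rationals to reals proceeds without invoking countable choice, in accordance with the paper's standpoint. This is handled by working pointwise with an explicit $\varepsilon$-$\delta$ argument powered by the uniform continuity built into the definition of $\BR$, rather than by extracting a convergent sequence of rationals via choice.
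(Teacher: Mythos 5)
Your proof is correct and follows essentially the same route as the paper: verify $h_a = a\cdot \id_{\Real} \in \BR$ for the forward direction, and for the converse determine $h$ on $\Rat$ via the group homomorphism property and then pass to all of $\Real$ by density and uniform continuity. The only difference is that where the paper cites its restriction and unique-extension results ($\Bic(\D Q) = \BR_{|\D Q}$ and the extension lemma from the author's thesis), you prove the required uniqueness of the extension directly with an $\varepsilon$--$\delta$ argument on $\phi := h - h_a$, which is a sound, choice-free inlining of those cited lemmas.
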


\begin{proof}
$h_a$ is a group homomorphism, and by the $\V$-lifting of morphisms  
$h_a \in \Mor(\C \Real, \C \Real) \TOT \id_{\Real} \circ h_a := h_a \in \BR$, which holds, since 
$h_a := a \cdot \id_{\Real} \in \BR$. If $h \in \MOR(\B {\C R}, \B {\C R})$, let its restriction $h_{|\D Q}$, where
$h_{|\D Q} : \D Q \to \D Q$ is a group homomorphism given by $\big(h_{|\D Q}\big)(q) = h(1) q$, for every $q \in \D Q$.
Since $\D Q$ is metrically dense in $\Real$, by Proposition 4.7.15. in~\cite{Pe15} we have that 
$\Bic(\D Q) = \BR_{|\D Q} = \{\phi_{|\D Q} \mid \phi \in \BR\}.$
Hence $h_{|\D Q} \in \Bic(\D Q)$. By Lemma 4.7.13. in~\cite{Pe15} there is a unique (up to equality) extension of 
$h_{|\D Q}$ in $\BR$. Hence $h = h_{h(1)}$.
\end{proof}

\begin{proposition}\label{prp: bproduct}
Let $\B {\C F} := (X, +^{\mathsmaller{X}}, 0^{\mathsmaller{X}}, -^{\mathsmaller{X}} ; F)$ and
$\B {\C G} := (Y, +^{\mathsmaller{Y}}, 0^{\mathsmaller{Y}}, -^{\mathsmaller{Y}} ; G)$ be Bishop topological groups.
Let the functions $+^{\mathsmaller{\mathsmaller{X \times Y}}} : (X \times Y) \times (X \times Y) \to X \times Y$, 
$-^{\mathsmaller{\mathsmaller{X \times Y}}} : X \times Y \to X \times Y$, and 
$0^{\mathsmaller{X \times Y}} \in X \times Y$ be defined by
$(x, y) +^{\mathsmaller{X \times Y}} (x{'}, y{'}) := \big(x +^{\mathsmaller{X}} x{'}, y +^{\mathsmaller{Y}} y{'}\big),$
$-^{\mathsmaller{X \times Y}}(x, y) := \big(-^{\mathsmaller{X}} x, -^{\mathsmaller{Y}}y\big),$ and
$0^{\mathsmaller{X \times Y}} := \big(0^{\mathsmaller{X}}, 0^{\mathsmaller{Y}}\big)$. 
Then $\B {\C F \times \C G} := (X \times Y, +^{\mathsmaller{X \times Y}}, 0^{\mathsmaller{X \times Y}},
-^{\mathsmaller{X \times Y}} ; F \times G)$ is a Bishop topological group.
\end{proposition}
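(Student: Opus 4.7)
The plan is to verify the three defining requirements of a Bishop topological group separately. That $(X \times Y, +^{\mathsmaller{X \times Y}}, 0^{\mathsmaller{X \times Y}}, -^{\mathsmaller{X \times Y}})$ is a group is a routine coordinatewise check from the group axioms of $\C X$ and $\C Y$, so the real content lies in showing that $+^{\mathsmaller{X \times Y}}$ and $-^{\mathsmaller{X \times Y}}$ are Bishop morphisms with respect to $F \times G$. In both cases I would invoke the $\V$-lifting of morphisms applied to the natural subbase $\{f \circ \pi_1 : f \in F\} \cup \{g \circ \pi_2 : g \in G\}$ of $F \times G$.

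For $-^{\mathsmaller{X \times Y}}$, the $\V$-lifting reduces the problem to showing $(f \circ \pi_1) \circ (-^{\mathsmaller{X \times Y}})$ and $(g \circ \pi_2) \circ (-^{\mathsmaller{X \times Y}})$ are in $F \times G$ for all $f \in F$ and $g \in G$. A direct pointwise computation gives these equal $f_{-} \circ \pi_1$ and $g_{-} \circ \pi_2$ respectively, both of which lie in $F \times G$ since $f_{-} \in F$ and $g_{-} \in G$ by the morphism hypotheses on $-^{\mathsmaller{X}}$ and $-^{\mathsmaller{Y}}$.

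For $+^{\mathsmaller{X \times Y}}$, the same reduction via $\V$-lifting reduces the task to checking that $(f \circ \pi_1) \circ +^{\mathsmaller{X \times Y}}$ lies in $(F \times G) \times (F \times G)$ for each $f \in F$, and symmetrically for $g \in G$. Evaluated at $((x, y), (x{'}, y{'}))$ the first map gives $f(x +^{\mathsmaller{X}} x{'})$, which factors as $f_{+} \circ \big(\pi_1 \otimes \pi_1\big)$, where $\pi_1 \otimes \pi_1 : (X \times Y) \times (X \times Y) \to X \times X$ sends $((x, y), (x{'}, y{'}))$ to $(x, x{'})$. Since $\pi_1 \in \Mor(\C F \times \C G, \C F)$, Corollary~\ref{crl: corprod3}(ii) applied with $\C H := \C F$ yields $\pi_1 \otimes \pi_1 \in \Mor((\C F \times \C G) \times (\C F \times \C G), \C F \times \C F)$; and $f_{+} \in F \times F$ by the hypothesis that $+^{\mathsmaller{X}} \in \Mor(\C F \times \C F, \C F)$, so the composition indeed lies in $(F \times G) \times (F \times G)$. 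The argument for $g \in G$ is identical, using $+^{\mathsmaller{Y}}$ and $\pi_2 \otimes \pi_2$.

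There is no substantive obstacle; the only thing requiring care is the bookkeeping between the several kinds of projections — those out of $X \times Y$ versus those out of $(X \times Y) \times (X \times Y)$ — and uniformly applying Corollary~\ref{crl: corprod3} to shuttle between the various binary products that appear. Once the correct subbase is fixed, both verifications reduce mechanically to the already-established morphism properties of $+^{\mathsmaller{X}}$, $+^{\mathsmaller{Y}}$, $-^{\mathsmaller{X}}$, and $-^{\mathsmaller{Y}}$.
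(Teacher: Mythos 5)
Your proposal is correct and follows essentially the same route as the paper: reduce both morphism claims via the $\V$-lifting to the subbase $\{f \circ \pi_1\} \cup \{g \circ \pi_2\}$, handle $-^{\mathsmaller{X \times Y}}$ by the pointwise identity $(f \circ \pi_1) \circ -^{\mathsmaller{X \times Y}} = f_- \circ \pi_1$, and factor $(f \circ \pi_1) \circ +^{\mathsmaller{X \times Y}}$ as $f_+$ composed with the map $((x,y),(x',y')) \mapsto (x,x')$, whose morphism property comes from Corollary~\ref{crl: corprod3}. The only cosmetic difference is that you realise this auxiliary map as $\pi_1 \otimes \pi_1$ via part (ii) of the corollary, while the paper writes it as $\big[\pi^{\mathsmaller{X}} \circ \pi_1\big] \times \big[\pi^{\mathsmaller{X}} \circ \pi_2\big]$ via part (i); these are the same function and either justification works.
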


\begin{proof} 
The proof that $X \times Y$ is a group is omitted as trivial. By the $\V$-lifting of morphisms
$+^{\mathsmaller{X \times Y}} \in \Mor\big([\C F \times \C G] \times [\C F \times \C G], \C F \times \C G\big)$ 
if and only if
$$
\forall_{f \in F}\forall_{g \in G}\bigg(\big(f \circ \pi^{\mathsmaller{X}}\big) \circ +^{\mathsmaller{X \times Y}} \in
[F \times G] \times [F \times G] \& \ \big(g \circ \pi^{\mathsmaller{Y}}\big) \circ +^{\mathsmaller{X \times Y}} 
\in [F \times G] \times [F \times G]\bigg),$$
where by the $\V$-lifting of the product Bishop topology
$$[F \times G] \times [F \times G] = \V_{f \in F}^{g \in G} 
\big(f \circ \pi^{\mathsmaller{X}}\big) \circ \pi_1^{\mathsmaller{X \times Y}}, 
\big(g \circ \pi^{\mathsmaller{Y}}\big) \circ \pi_1^{\mathsmaller{X \times Y}},
\big(f \circ \pi^{\mathsmaller{X}}\big) \circ \pi_2^{\mathsmaller{X \times Y}},
\big(g \circ \pi^{\mathsmaller{Y}}\big) \circ \pi_2^{\mathsmaller{X \times Y}}
.$$
If $f \in F$, $x, x{'} \in X$, and $y, y{'} \in Y$, and if $z := \big((x, y), (x{'}, y{'})\big)$, then
\begin{align*}
\big[\big(f \circ \pi^{\mathsmaller{X}}\big) \circ +^{\mathsmaller{X \times Y}}\big]\big(z\big) & :=
\big(f \circ \pi^{\mathsmaller{X}}\big)\big(x +^{\mathsmaller{X}} x{'}, y +^{\mathsmaller{Y}} y{'}\big)\\
& := f(x +^{\mathsmaller{X}} x{'})\\
& := (f \circ +^{\mathsmaller{X}})(x, x{'})\\
& := (f \circ +^{\mathsmaller{X}})\big(\pi^{\mathsmaller{X}}(x, y), \pi^{\mathsmaller{X}}(x{'}, y{'})\big)\\
& := (f \circ +^{\mathsmaller{X}}) \bigg(\big[\pi^{\mathsmaller{X}} \circ \pi_1^{\mathsmaller{X \times Y}}\big]
\big(z\big), \big[\pi^{\mathsmaller{X}} \circ \pi_2^{\mathsmaller{X \times Y}}\big]
\big(z\big)\bigg)\\
& := \big[(f \circ +^{\mathsmaller{X}}) \circ h \big)\big](z),
\end{align*}
where by Corollary~\ref{crl: corprod3} the function $h : [(X \times Y) \times (X \times Y)] \to X \times X$, where
$h := \big[\pi^{\mathsmaller{X}} \circ \pi_1^{\mathsmaller{X \times Y}}\big]
\times \big[\pi^{\mathsmaller{X}} \circ \pi_2^{\mathsmaller{X \times Y}}\big]$
is a Bishop morphism. Since $\big(f \circ \pi^{\mathsmaller{X}}\big) \circ +^{\mathsmaller{X \times Y}} = 
(f \circ +^{\mathsmaller{X}}) \circ h$
\begin{center}
\begin{tikzpicture}

\node (E) at (0,0) {$(X \times Y) \times (X \times Y)$};
\node[right=of E] (F) {$X \times X$};
\node[right=of F] (A) {$X$};
\node[right=of A] (B) {$\Real$,};

\draw[->] (E)--(F) node [midway,above] {$h$};
\draw[->] (F)--(A) node [midway,above] {$+^{\mathsmaller{X}}$};
\draw[->] (A)--(B) node [midway,above] {$f$};
\draw[->,bend right] (E) to node [midway,below] {$\mathsmaller{\big(f \circ \pi^{\mathsmaller{X}}\big) 
\circ +^{\mathsmaller{X \times Y}}}$} (B) ;

\end{tikzpicture}
\end{center}
we get $\big(f \circ \pi^{\mathsmaller{X}}\big) \circ +^{\mathsmaller{X \times Y}} \in 
\Mor\big([\C F \times \C G] \times [\C F \times \C G], \C R\big)$ as a composition of Bishop morphisms, hence
$\big(f \circ \pi^{\mathsmaller{X}}\big) \circ +^{\mathsmaller{X \times Y}} \in [F \times G] \times [F \times G]$.
Working similarly, we get $\big(g \circ \pi^{\mathsmaller{Y}}\big) \circ +^{\mathsmaller{X \times Y}} 
\in [F \times G] \times [F \times G]$. By the $\V$-lifting of morphisms we also have that
$$-^{\mathsmaller{X \times Y}} \in \Mor\big(\C F \times \C G, \C F \times \C G\big) \TOT
\forall_{f \in F}\forall_{g \in G}\bigg(\big(f \circ \pi^{\mathsmaller{X}}\big) \circ -^{\mathsmaller{X \times Y}} \in
F \times G \ \& \ \big(g \circ \pi^{\mathsmaller{Y}}\big) \circ -^{\mathsmaller{X \times Y}} 
\in F \times G\bigg).$$
If $f \in F$, $x \in X$, and $y \in Y$, then
$$\big[(\big(f \circ \pi^{\mathsmaller{X}}\big) \circ -^{\mathsmaller{X \times Y}}\big](x, y) :=
(\big(f \circ \pi^{\mathsmaller{X}}\big)\big(-^{\mathsmaller{X}} x, -^{\mathsmaller{Y}}y\big) := f(-^{\mathsmaller{X}} x)
:= f_-(x) := \big(f_- \circ \pi^{\mathsmaller{X}}\big)(x, y)$$ 
i.e., $(\big(f \circ \pi^{\mathsmaller{X}}\big) \circ -^{\mathsmaller{X \times Y}} = f_- \circ \pi^{\mathsmaller{X}} \in
F \times G$, since $f_- \in F$. Similarly, $\big(g \circ \pi^{\mathsmaller{Y}}\big) \circ -^{\mathsmaller{X \times Y}} 
\in F \times G$. 
\end{proof}

Since the projections $\pi^{\mathsmaller{X}}, \pi^{\mathsmaller{Y}}$ are homomorphisms, they are Bishop homomorphisms.
By the universal property of the product Bishop topology, $\B {\C F \times \C G}$ is the product in 
$\BTopGrp$.

\section{Closed subsets in Bishop topological groups}
\label{sec: closed}

\begin{proposition}\label{prp: closed1}
Let $C \subseteq X$ and $x_0 \in X$.\\[1mm] 
\normalfont (i)
\itshape If $C$ is closed, then $-C := \{-c \mid c \in C\}$ is closed.\\[1mm]
\normalfont (ii)
\itshape $\overline{-C} = - \overline{C}$.\\[1mm]
\normalfont (iii)
\itshape If $C$ is closed, then $x_0 + C := \{x_0 + c \mid c \in C\}$ is closed.\\[1mm]
\normalfont (iv)
\itshape $\overline{x_0 + C} = x_0 + \overline{C}$.
\end{proposition}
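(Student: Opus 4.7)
The plan is to reduce all four parts to a single general fact: for any Bishop isomorphism $\phi \colon X \to X$, one has $\phi(\overline{C}) = \overline{\phi(C)}$. I would first establish this fact, and then instantiate $\phi$ as the negation map for (i) and (ii), and as the left-translation map $+_{x_0}^1$ for (iii) and (iv). Since by the definitions in the statement $-C$ and $x_0 + C$ are precisely the images of $C$ under these maps, the four claims fall out uniformly.

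The general fact splits into two inclusions. The direction $\phi(\overline{C}) \subseteq \overline{\phi(C)}$ is immediate from Proposition~\ref{prp: top1}(iv). For the reverse inclusion, since $\phi$ is a Bishop isomorphism its inverse $\phi^{-1}$ is itself a Bishop morphism, so Proposition~\ref{prp: top1}(iv) applied to $\phi^{-1}$ gives
$$\phi^{-1}\bigl(\overline{\phi(C)}\bigr) \subseteq \overline{\phi^{-1}(\phi(C))} = \overline{C}.$$
Applying the bijection $\phi$ to both sides and using $\phi \circ \phi^{-1} = \id$ yields $\overline{\phi(C)} \subseteq \phi(\overline{C})$.

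With this lemma in hand, (ii) is the instance $\phi = -$, which is a Bishop isomorphism by Proposition~\ref{prp: bgroup3}(i) (and is its own inverse), while (i) is the special case where $\overline{C} = C$. Likewise, (iv) is the instance $\phi = +_{x_0}^1$, a Bishop isomorphism by Proposition~\ref{prp: bgroup3}(iv) whose inverse is $+_{-x_0}^1$, and (iii) follows by specialising (iv) to a closed $C$.

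There is no genuine obstacle; the entire argument rests on the symmetric use of Proposition~\ref{prp: top1}(iv) for $\phi$ and for $\phi^{-1}$, which is exactly what Proposition~\ref{prp: bgroup3} makes available by upgrading $-$ and $+_{x_0}^1$ from Bishop morphisms to Bishop isomorphisms. The only step worth spelling out carefully is the bijectivity passage $\overline{\phi(C)} = \phi\bigl(\phi^{-1}(\overline{\phi(C)})\bigr) \subseteq \phi(\overline{C})$, which makes essential use of both surjectivity of $\phi$ and the fact that $\phi^{-1}$ is a morphism.
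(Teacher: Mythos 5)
Your proof is correct, but it takes a genuinely different route from the paper. You factor all four claims through the single lemma that a Bishop isomorphism $\phi \colon X \to X$ satisfies $\phi(\overline{C}) = \overline{\phi(C)}$, obtained by applying Proposition~\ref{prp: top1}(iv) symmetrically to $\phi$ and to $\phi^{-1}$, and you then derive (i) and (iii) as the special cases of (ii) and (iv) in which $\overline{C}=C$. The paper works in the opposite direction and at a lower level: it proves (i) and (iii) first by unfolding the definition of $\overline{\,\cdot\,}$ and explicitly transporting the witness via the functions $f_-$ and $f^{1}_{x_0}$ of Proposition~\ref{prp: bgroup3}(iii), and then uses (i) (resp.\ (iii)) together with monotonicity and idempotence of the closure operator for one inclusion of (ii) (resp.\ (iv)), handling the other inclusion again by a direct witness computation. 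Your version is more modular and immediately generalises to any Bishop isomorphism (not just inversion and translation), at the cost of delegating the computational content to Proposition~\ref{prp: top1}(iv), whose proof is only cited from~\cite{Pe15}; the paper's element-wise proof keeps the extracted algorithm visible on the page, which is one of its stated aims. Two small points you rightly flag but should keep in mind: the step $\phi^{-1}(\phi(C)) = C$ uses injectivity of $\phi$, and the final rewriting $\overline{\phi(C)} = \phi\bigl(\phi^{-1}(\overline{\phi(C)})\bigr)$ uses surjectivity; both are available here since $-$ and $+^{1}_{x_0}$ are bijections by Proposition~\ref{prp: bgroup3}.
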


\begin{proof}
 (i) We suppose that $u \in \overline{-C}$ i.e., if $f(u) > 0$, there is $w \in -C$ such that $f(w) > 0$, for every 
 $f \in F$, and we show that $u \in -C$ i.e., $-u \in C$. Since $C$ is closed, it suffices to show that 
 $-u \in \overline{C}$. Let $f \in F$ such that $f(-u) > 0 :\TOT f_{-}(u) > 0$. By our hypothesis on $u$ there is 
 $w \in -C$ such that $f_{-}(w) := f(-w) > 0$, and $-w \in C$.\\
 (ii) Since $C \subseteq \overline{C}$, we get $-C \subseteq -\overline{C}$. Since $\overline{C}$ is closed, by (i) 
 $-\overline{C}$ is also closed, hence $\overline{-C} \subseteq \overline{-\overline{C}} = -\overline{C}$. To show the 
 converse inclusion $-\overline{C} \subseteq \overline{-C}$, let $x \in -\overline{C}$, hence $-x \in \overline{C}$
 i.e., if $f(-x) > 0$, there is $u \in C$ with $f(c) > 0$, for every $f \in F$. We show that $x \in \overline{-C}$. 
 Let $f \in F$ with $f(x) = f(-(-x)) = f_{-}(-x) > 0$. By our hypothesis on $-x$, there $u \in C$ with $f_{-}(c) :=
 f(-u) > 0$, and $-u \in -C$.\\
 (iii) We suppose that $y \in \overline{x_0 + C}$ i.e., if $f(y) > 0$, there is $c \in C$ such that $f(x_0 + c) > 0$,
 for every  $f \in F$, and we show that $y \in x_0 + C$ by showing that $-x_0 + y \in C$. As $C$ is closed,
 it suffices to show that  $-x_0 + y \in \overline{C}$. Let $f \in F$ such that $f(-x_0 + y) > 0 \TOT f^1_{-x_0}(y) > 0$.
 We show that there is $c \in C$ such that $f(c) > 0$. By our hypothesis on $y$, there is $c \in C$ such that
 $f^1_{-x_0}(x_0 + c) := f(-x_0 + x_0 + c) = f(c) > 0$.\\
 (iv) Since $\overline{C}$ is closed, and $x_0 + C \subseteq x_0 + \overline{C}$, we get 
 $\overline{x_0 + C} \subseteq \overline{x_0 + \overline{C}} = x_0 + \overline{C}$. For the converse inclusion, let
 $x := x_0 + y$ with $y \in \overline{C}$. We show that $x \in \overline{x_0 + C}$. Let $f \in F$ such that 
 $f(x_0 + y) > 0 \TOT f^1_{x_0}(y) > 0$. We find $c \in C$ such that $f(x_0 + c) > 0$. 
 By our hypothesis on $y$ though, there is $c \in C$ such that $f^1_{x_0}(c) := f(x_0 + c) > 0$.
\end{proof}

\begin{corollary}\label{cor: corclosed1}
$F$ separates the points of $X$ if and only if $\{0^X\}$ is closed.
\end{corollary}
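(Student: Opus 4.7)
The plan is to reduce this to Proposition~\ref{prp: tight}, which already establishes the equivalence between separation of points of $X$ by $F$ and closedness of all singletons $\{x\}$ in $X$. Thus one direction is immediate: if $F$ separates points, then by (i)$\Leftrightarrow$(iii) of Proposition~\ref{prp: tight} applied to $x = 0^X$, the singleton $\{0^X\}$ is closed.

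For the converse, assume $\{0^X\}$ is closed. By Proposition~\ref{prp: tight} it suffices to show that every singleton $\{x_0\}$, with $x_0 \in X$, is closed. The key observation is the set-theoretic identity $\{x_0\} = x_0 + \{0^X\}$, which follows from the group axiom $x_0 + 0^X = x_0$. Then Proposition~\ref{prp: closed1}(iii), applied to $C := \{0^X\}$ with the shift $x_0$, gives directly that $x_0 + \{0^X\}$ is closed, hence $\{x_0\}$ is closed.

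Since $x_0 \in X$ was arbitrary, every singleton is closed, and invoking Proposition~\ref{prp: tight} once more yields that $F$ separates the points of $X$. The proof is very short; there is no real obstacle, the whole point is that Proposition~\ref{prp: closed1}(iii) provides the group-theoretic translation invariance of closedness, which lets us promote one closed singleton into all of them.
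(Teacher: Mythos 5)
Your proof is correct and follows exactly the paper's own argument: both directions reduce to Proposition~\ref{prp: tight}, and the converse uses Proposition~\ref{prp: closed1}(iii) together with the identity $\{x_0\} = x_0 + \{0^X\}$ to promote the closedness of $\{0^X\}$ to all singletons. No differences worth noting.
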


\begin{proof}
If $\{0^X\}$ is closed, then by Proposition~\ref{prp: closed1}(iii) $\{x\} = x + \{0^X\}$ is closed, for every $x \in X$.
By Proposition~\ref{prp: tight} $F$ is separating. The converse follows immediately from  Proposition~\ref{prp: tight}.
\end{proof}

\begin{proposition}\label{prp: closed2}
 If $C$ is an open subgroup of $X$, then $C$ is closed in $X$
\end{proposition}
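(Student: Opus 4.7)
The plan is to prove this directly from the definition of closedness, exploiting both the subgroup structure of $C$ (which gives $0^X \in C$ and closure under $+, -$) and the fact that translations by a fixed point are Bishop isomorphisms (Proposition~\ref{prp: bgroup3}(iii)). So I would start by fixing an arbitrary $x \in \overline{C}$ and aim to construct an explicit witness that $x \in C$.

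Since $C$ is open and $0^X \in C$, openness at $0^X$ yields some $f \in F$ with $f(0^X) > 0$ and $U(f) \subseteq C$. The key translation trick is to consider $g := f^1_{-x} \in F$, where $g(y) = f(-x + y)$; this is in $F$ by Proposition~\ref{prp: bgroup3}(iii). Evaluating at $x$ gives $g(x) = f(-x + x) = f(0^X) > 0$. Now the hypothesis $x \in \overline{C}$, applied to $g$, produces some $c \in C$ with $g(c) > 0$, i.e., $f(-x + c) > 0$. This places $-x + c$ in $U(f)$, hence in $C$.

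Finally, I use the subgroup structure of $C$ to pull $x$ back into $C$: since $c \in C$ we have $-c \in C$, and then $-x = (-x + c) + (-c) \in C$ by closure under $+$, and $x = -(-x) \in C$ by closure under $-$. This yields the desired $x \in C$, completing the verification that $\overline{C} \subseteq C$.

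There is no serious obstacle here; the only subtle point is that the argument is genuinely constructive because we never needed to decide whether $x \in C$ or not — the translated test function $g$ lets us use the neighborhood $U(f)$ of $0^X$ to produce a concrete element $-x + c$ of $C$, and then the subgroup operations give the witness for $x \in C$ algorithmically. This is in contrast to the classical argument, which writes $X \setminus C$ as a union of open cosets and concludes via complementation — a route unavailable constructively, as emphasized in Theorem~\ref{thm: cr} and the surrounding discussion.
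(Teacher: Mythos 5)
Your proof is correct and is essentially identical to the paper's own argument: both open at $0^X$ to get $f$ with $U(f) \subseteq C$, translate to $f^1_{-x} \in F$, apply the hypothesis $x \in \overline{C}$ to obtain $c \in C$ with $-x + c \in U(f) \subseteq C$, and finish with the subgroup operations. You merely spell out the final algebraic step ($-x = (-x+c) + (-c)$, $x = -(-x)$) that the paper leaves implicit.
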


\begin{proof}
 Let $x \in \overline{C}$ i.e., if $f(x) > 0$, there is $u \in C$ such that $f(u) > 0$, 
 for every $h \in F$. We show that $x \in C$. Since $C$ is a subgroup of $X$, we have that $0 \in C$. Since $C$ is 
 open in $X$, there is $g \in F$ such that $g(0) > 0$ and $U(g) \subseteq C$. Since $g_{-x}^1 \in F$ and
 $$g_{-x}^1(x) := g(-x + x) = g(0) > 0,$$
 by our hypothesis on $x$ there is $u \in C$ such that $g_{-x}^1(u) := g(-x + u) > 0$. Since $U(g) \subseteq C$,
 we get $-x + u \in C$, and since $C$ is a subgroup of $X$, we get $x \in C$.
 \end{proof}

Classically, $C$ is closed, since its complement in $X$ is the open set $\bigcup \{x + C \mid x \notin C\}$,
where $x + C$ is open, for every $x \in X$, as $C$ is open (this holds also constructively). The double use of negation
in the classical proof is replaced here by the clear algorithm of the previous proof.

\begin{lemma}\label{lem: corbgroup6}
The \textit{commutator map} $\com : X \times X
\to X$ is defined by $\com(x, y) := x + y - x - y,$ , for every $(x, y) \in X \times X$.\\[1mm]
\normalfont (i)
\itshape $\com \in  \Mor(\C F \times \C F, \C F)$.\\[1mm]
\normalfont (ii)
\itshape If $x, y \in X$, then $x + y =_X y + x \TOT \com(x, y) = 0^\mathsmaller{X}$.\\[1mm]
\normalfont (iii)
\itshape If $x \in X$, the mapping $\com_x : X \to X$, where $\com_x(y) := \com(x, y)$, for every $y \in X$, is in
$\Mor(\C F, \C F)$, and for every $f \in F$ the composition $f \circ \com_x \in F$
\begin{center}
\begin{tikzpicture}

\node (E) at (0,0) {$X$};
\node[right=of E] (F) {$X$};
\node[right=of F] (B) {$\Real$.};

\draw[->] (E)--(F) node [midway,above] {$\mathsmaller{\com_x}$};
\draw[->] (F)--(B) node [midway,above] {$f$};
\draw[->,bend right] (E) to node [midway,below] {$\mathsmaller{f \circ \com_x}$} (B) ;

\end{tikzpicture}
\end{center}
\normalfont (iv)
\itshape If $x, y \in X$, then $\com_x(y) = - \com_y(x)$.\\[1mm]
\normalfont (v)
\itshape If $x, y \in X$, then $\com_x(y) = 0^\mathsmaller{X} \TOT \com_y(x) = 0^\mathsmaller{X}$.
\end{lemma}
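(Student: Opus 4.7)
The plan is to prove the five items in order, relying on (i) to carry most of the technical weight and then deriving the rest either by group-theoretic manipulation or by appealing to the general machinery about products already established.

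For (i), I would exhibit $\com$ as a composition of Bishop morphisms. Since $+ \in \Mor(\C F \times \C F, \C F)$, $- \in \Mor(\C F, \C F)$, and the projections $\pi_1,\pi_2 : X \times X \to X$ are morphisms (by the definition of the product Bishop topology), the maps $-\circ\pi_1$ and $-\circ\pi_2$ are both in $\Mor(\C F \times \C F, \C F)$. By Corollary~\ref{crl: corprod3}(i) applied with $\C H = \C F \times \C F$ and both target factors equal to $\C F$, the map $(x,y)\mapsto (-x,-y)$ lies in $\Mor(\C F \times \C F, \C F \times \C F)$. Composing with $+$ gives a morphism $g_2 : (x,y)\mapsto -x-y$. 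Setting $g_1 := +$ and applying Corollary~\ref{crl: corprod3}(i) once more to the pair $(g_1,g_2)$, the map $(x,y)\mapsto (x+y,-x-y)$ is a Bishop morphism $X\times X \to X\times X$, and $\com = + \circ (g_1,g_2)$ is therefore in $\Mor(\C F \times \C F, \C F)$.

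For (ii), I would argue purely in the group $\C X$: from $\com(x,y)=0^{\mathsmaller{X}}$ i.e.\ $x+y-x-y = 0^{\mathsmaller{X}}$, adding $y$ and then $x$ on the right gives $x+y = y+x$; conversely, the commutativity of $x$ and $y$ immediately yields $\com(x,y)=0^{\mathsmaller{X}}$. For (iii), rather than redo the composition argument, I would invoke Proposition~\ref{prp: prod6}(iii) applied to $\Phi = \com$ (which is a morphism by (i)): its section $\Phi_x = \com_x$ is automatically in $\Mor(\C F,\C F)$, and then $f\circ\com_x \in F$ follows from the fact that $F = \Mor(\C F,\C R)$, i.e.\ morphisms compose with elements of $F$ to give elements of $F$.

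For (iv), I would unfold definitions and use that in any group $-(a_1+\cdots+a_n) = -a_n - \cdots - a_1$: then
\[
-\com_y(x) = -(y+x-y-x) = -(-x)-(-y)-x-y = x+y-x-y = \com_x(y).
\]
Finally (v) is an immediate consequence of (iv): $\com_x(y)=0^{\mathsmaller{X}}$ iff $-\com_y(x)=0^{\mathsmaller{X}}$ iff $\com_y(x)=0^{\mathsmaller{X}}$. The only genuine obstacle is (i), where care is needed because $X$ is not assumed abelian, so one must not conflate $-x-y$ with $-(x+y)$; phrasing the decomposition via the morphism $(x,y)\mapsto(-x,-y)$ and appealing to Corollary~\ref{crl: corprod3} sidesteps this cleanly.
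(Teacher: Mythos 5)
Your proposal is correct, and parts (ii)--(v) coincide with the paper's treatment (the paper simply declares (ii), (iv) immediate, derives (iii) from Proposition~\ref{prp: prod6}(iii) exactly as you do, and gets (v) from (iv)). The only real divergence is in (i): the paper observes that $\com = \pi_1 + \pi_2 - \pi_1 - \pi_2$ and cites Proposition~\ref{prp: bgroup6}, i.e.\ the fact that $\Mor(\C F \times \C F, \C F)$ is itself a group under the pointwise operations, so any group word in the projections is automatically a Bishop morphism. You instead build the same map by an explicit chain of compositions through Corollary~\ref{crl: corprod3}(i): pair $-\circ\pi_1$ with $-\circ\pi_2$, compose with $+$, pair with $+$ again, compose with $+$. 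These are the same argument at different levels of abstraction --- your chain is essentially the omitted proof of Proposition~\ref{prp: bgroup6} specialised to this word, so your version is self-contained where the paper's is a one-liner resting on a prior (unproved-in-text) proposition. Your care about not conflating $-x-y$ with $-(x+y)$ in the non-abelian setting is well placed and consistent with the intended reading $x+y+(-x)+(-y)$; both routes are sound.
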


\begin{proof}
(i) By the definition of the product Bishop topology $\pi_1, \pi_2 \in \Mor(\C F \times \C F,
\C F)$. Since $\com := \pi_1 + \pi_2 - \pi_1 - \pi_2$, by Proposition~\ref{prp: bgroup6} we get 
$\com \in \Mor(\C F \times \C F, \C F)$.\\
(ii) and (iii) The proof for (ii) is immediate. By (i) and Proposition~\ref{prp: prod6}(iii) 
 $\com_x \in \Mor(\C F, \C F)$, hence by the definition of a Bishop morphism $f \circ \com_x \in F$, for every 
 $f \in F$.\\
(iv) is trivial and (v) follows immediately from (iv).
\end{proof}

\begin{lemma}\label{lem: 2corbgroup6}
Let $x \in X$ and $H \subseteq X$. The maps
$\normal_x : X \to X$ and $\Normal_x : X \to X$ are defined, for every $x \in X$, respectively, by 
$\normal_x(y) := x + y - x$ and $\Normal_x(y) := y + x - y$.
Let $\normal_x^{\mathsmaller{H}}, \Normal_x^{\mathsmaller{H}} : H \to X$ be the restrictions
of $\normal_x$ and $\Normal_X$ to $H$, respectively.\\[1mm]
\normalfont (i)
\itshape If $x, y \in X$, then $\normal_x(y) = \Normal_y(x)$.\\[1mm] 
\normalfont (ii)
\itshape $\normal_x \in  \Mor(\C F, \C F)$ and $\Normal_x \in  \Mor(\C F, \C F)$.\\[1mm]
\normalfont (iii)
\itshape If $H \leq X$, then $H$ is normal if and only if $\normal_x^{\mathsmaller{H}}
: H \to H$, for every $x \in X$.\\[1mm]
\normalfont (iv)
\itshape  If $f \in F$, the compositions $f \circ \normal_x \in F$ and $f \circ \Normal_x \in F$.
\begin{center}
\begin{tikzpicture}

\node (E) at (0,0) {$X$};
\node[right=of E] (F) {$X$};
\node[right=of F] (B) {$\Real$.};
\node[left=of E] (G) {$X$};
\node[left=of G] (A) {$\Real$};

\draw[->] (E)--(F) node [midway,above] {$\mathsmaller{\normal_x}$};
\draw[->] (G)--(A) node [midway,above] {$\ \ \mathsmaller{f}$};
\draw[->] (F)--(B) node [midway,above] {$\mathsmaller{f}$};
\draw[->,bend right] (E) to node [midway,below] {$\mathsmaller{f \circ \normal_x}$} (B) ;
\draw[->] (E)--(G) node [midway,below] {$ \ \ \mathsmaller{\Normal_x}$};
\draw[->,bend right] (E) to node [midway,above] {$\mathsmaller{f \circ \Normal_x}$} (A) ;

\end{tikzpicture}
\end{center}
\normalfont (v)
\itshape  If $H$ is normal, then $\normal_x^{\mathsmaller{H}} \in \Mor\big(\C F_{|H}, \C F_{|H}\big)$.\\[1mm]
\normalfont (vi)
\itshape  If $\Normal_x^{\mathsmaller{H}} : H \to H$, then $\Normal_x^{\mathsmaller{H}} \in \Mor\big(\C F_{|H},
\C F_{|H}\big)$.
\end{lemma}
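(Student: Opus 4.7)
Part (i) is immediate by unfolding: both sides equal $x+y-x$ by the respective defining equations of $\normal_x(y)$ and $\Normal_y(x)$.

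For (ii), the strategy is to exhibit $\normal_x$ and $\Normal_x$ as one-variable sections of Bishop morphisms $X \times X \to X$. Set
\[
\widetilde{\normal} := \pi_1 + \pi_2 - \pi_1, \qquad \widetilde{\Normal} := \pi_2 + \pi_1 - \pi_2,
\]
so that $\widetilde{\normal}(a,b)=a+b-a$ and $\widetilde{\Normal}(a,b)=b+a-b$ pointwise. Since $\pi_1,\pi_2 \in \Mor(\C F \times \C F,\C F)$ by definition of the product topology, and since $\Mor(\C F \times \C F,\C F)$ is closed under pointwise $+$ and $-$ by Proposition~\ref{prp: bgroup6}, both $\widetilde{\normal}$ and $\widetilde{\Normal}$ belong to $\Mor(\C F \times \C F,\C F)$. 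Fixing the first argument to $x$ and applying Proposition~\ref{prp: prod6}(iii) yields $\normal_x, \Normal_x \in \Mor(\C F,\C F)$.

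Part (iii) is a direct unfolding of normality in additive notation: $H$ is normal in $X$ iff $x + h - x \in H$ for all $x \in X$ and $h \in H$, which is precisely the condition that the restriction $\normal_x^{\mathsmaller{H}}$ land in $H$ for every $x$. Part (iv) is then immediate from (ii) by invoking the defining clause of a Bishop morphism on $f \in F$, giving $f \circ \normal_x \in F$ and $f \circ \Normal_x \in F$.

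For (v) and (vi) I would use that the relative Bishop topology $F_{|H}$ on a subset $H$ consists of (or is generated by) the restrictions $f_{|H}$ with $f \in F$. For any $f \in F$ the pointwise identity $f_{|H} \circ \normal_x^{\mathsmaller{H}} = (f \circ \normal_x)_{|H}$ holds, and by (iv) the right-hand side lies in $F_{|H}$; hence $\normal_x^{\mathsmaller{H}} \in \Mor(\C F_{|H},\C F_{|H})$ whenever normality of $H$ makes the restriction well-typed. The same argument, reading $\Normal$ for $\normal$, gives (vi) under the hypothesis that $\Normal_x^{\mathsmaller{H}}:H\to H$. The only substantive step is (ii); the main obstacle there is simply identifying the correct compositional decomposition so that Propositions~\ref{prp: bgroup6} and~\ref{prp: prod6}(iii) can be applied, after which (iii)--(vi) are routine bookkeeping.
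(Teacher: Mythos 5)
Your proposal is correct, and parts (i), (iii), (iv), (v), (vi) coincide with the paper's argument: in particular for (v)--(vi) the paper likewise reduces to the pointwise identity $f_{|H} \circ \normal_x^{\mathsmaller{H}} = (f \circ \normal_x)_{|H}$ and invokes the $\V$-lifting of morphisms for the relative topology $F_{|H}$ (which is generated by the restrictions $f_{|H}$, so checking on that subbase suffices -- you should state the lifting step explicitly rather than hedge with ``consists of (or is generated by)''). The one genuinely different step is (ii). You realise $\normal_x$ and $\Normal_x$ as sections of the two-variable morphisms $\pi_1 + \pi_2 - \pi_1$ and $\pi_2 + \pi_1 - \pi_2$ on $X \times X$, using the group structure of $\Mor(\C F \times \C F, \C F)$ from Proposition~\ref{prp: bgroup6} together with Proposition~\ref{prp: prod6}(iii); this mirrors exactly how the paper handles the commutator map in Lemma~\ref{lem: corbgroup6}. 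The paper instead stays in one variable: it shows the constant map $c_x(y) := x$ is in $\Mor(\C F, \C F)$ (since $f \circ c_x = \overline{f(x)}^{\mathsmaller{X}} \in F$), writes $\normal_x = c_x + \id_X - c_x$ and $\Normal_x = \id_X + c_x - \id_X$, and applies Proposition~\ref{prp: bgroup6} directly to the group $\Mor(\C F, \C F)$. Your route buys uniformity with the commutator lemma at the cost of passing through the product space and Proposition~\ref{prp: prod6}(iii); the paper's route is slightly more economical, needing only the small observation that constant maps are morphisms. Both rest on the same closure property and are equally constructive.
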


\begin{proof}
(i) The proof is immediate. For the proof of (ii), the function $c_x : X \to X$, 
defined by $c_x(y) := x$, for every $y \in X$, is in 
$\Mor(\C F, \C F)$; if $f \in F$, then $(f \circ c_x)(y) := f(x)$, for every $y \in X$, hence $f \circ c_x = 
\overline{f(x)}^{\mathsmaller{X}} \in F$. The identity map $\id_X$ on $X$ is also in $\Mor(\C F, \C F)$.
Since $\normal_x := c_x + \id_X - c_x$, by Proposition~\ref{prp: bgroup6} $\normal_x \in  \Mor(\C F, \C F)$.
Since $\Normal_x := \id_X + c_x - \id_X$, we get $\Normal_x \in  \Mor(\C F, \C F)$.
(iii) and (iv) are immediate to show.  For the proof of (v), by the $\V$-lifting of morphisms we have that
$\normal_x^{\mathsmaller{H}} \in \Mor\big(\C F_{|H}, \C F_{|H}\big)$ if and only if 
$\forall_{f \in F}\big(f_{|H} \circ \normal_x^{\mathsmaller{H}} \in F_{|H}\big).$
If $f \in F$ and $v \in H$, then
$\big(f_{|H} \circ \normal_x^{\mathsmaller{H}}\big)(v) := f_{|H}(x + v -x)
:= f(x + v -x) := \big(f \circ \normal_x\big)(v) := \big(f \circ \normal_x\big)_{|H}(v)$
i.e., $f_{|H} \circ \normal_x^{\mathsmaller{H}} = \big(f \circ \normal_x\big)_{|H} \in F_{|H}$, since  
$f \circ \normal_x \in F$. For the proof of (vi), we proceed as in the proof of (v).
\end{proof}

\begin{theorem}\label{thm: bnbh1}
Let $H$ be a subgroup of $X$.\\[1mm]
\normalfont (i)
\itshape The closure $\overline{H}$ of $H$ is also a subgroup of $X$.\\[1mm]
\normalfont (ii)
\itshape If $F$ is a separating Bishop topology and $H$ is abelian, then $\overline{H}$ is abelian.\\[1mm]
\normalfont (iii)
\itshape If $H$ is normal, then $\overline{H}$ is normal.
\end{theorem}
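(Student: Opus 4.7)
The overall plan is uniform: to show a point lies in the closure $\overline{H}$, I would exploit that Bishop morphisms pull back closed sets to closed sets (Proposition~\ref{prp: top1}(iii)) and combine this with the morphism-theoretic lemmas on translations, the commutator $\com$ and the conjugation $\normal_x$ already established in Propositions~\ref{prp: bgroup3}--\ref{prp: bgroup4} and Lemmas~\ref{lem: corbgroup6}--\ref{lem: 2corbgroup6}.

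For \emph{part} (i), closure of $\overline{H}$ under negation is immediate from Proposition~\ref{prp: closed1}(ii): since $H$ is a subgroup, $-H = H$, hence $\overline{H} = \overline{-H} = -\overline{H}$. For closure under addition, I would fix $x, y \in \overline{H}$ and $f \in F$ with $f(x+y) > 0$, and argue in two stages. First, by Proposition~\ref{prp: bgroup3}(iii) the function $f^{1}_{x}$ lies in $F$, and $f^{1}_{x}(y) = f(x+y) > 0$, so the membership $y \in \overline{H}$ supplies $h_{1} \in H$ with $f(x + h_{1}) > 0$. Second, $f^{2}_{h_{1}} \in F$ satisfies $f^{2}_{h_{1}}(x) = f(x + h_{1}) > 0$, so the membership $x \in \overline{H}$ yields $h_{2} \in H$ with $f(h_{2} + h_{1}) > 0$; since $H$ is a subgroup, $h_{2} + h_{1} \in H$ is the desired witness.

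For \emph{part} (ii), the plan is to promote abelianness through $\com$ one coordinate at a time. The separating hypothesis gives, via Corollary~\ref{cor: corclosed1}, that $\{0^{\mathsmaller{X}}\}$ is closed in $X$. For each fixed $x \in H$, the map $\com_{x} \colon X \to X$ is a Bishop morphism by Lemma~\ref{lem: corbgroup6}(iii), so $\com_{x}^{-1}(\{0^{\mathsmaller{X}}\})$ is closed and, because $H$ is abelian, contains $H$; hence it contains $\overline{H}$. This yields $\com(x, y) = 0^{\mathsmaller{X}}$ for every $x \in H$ and $y \in \overline{H}$. Using the symmetry $\com_{x}(y) = 0^{\mathsmaller{X}} \TOT \com_{y}(x) = 0^{\mathsmaller{X}}$ from Lemma~\ref{lem: corbgroup6}(v), for each fixed $y \in \overline{H}$ the closed preimage $\com_{y}^{-1}(\{0^{\mathsmaller{X}}\})$ then contains $H$, so also $\overline{H}$, which is exactly commutativity on $\overline{H}$.

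For \emph{part} (iii), part (i) already ensures $\overline{H}$ is a subgroup, so only invariance under each conjugation $\normal_{x}$ must be verified. For every $x \in X$, Lemma~\ref{lem: 2corbgroup6}(ii) gives $\normal_{x} \in \Mor(\C F, \C F)$, so $\normal_{x}^{-1}(\overline{H})$ is closed; normality of $H$ gives $\normal_{x}(H) \subseteq H \subseteq \overline{H}$, i.e., $H \subseteq \normal_{x}^{-1}(\overline{H})$, and passing to closures yields $\overline{H} \subseteq \normal_{x}^{-1}(\overline{H})$, which is precisely $\normal_{x}(\overline{H}) \subseteq \overline{H}$. The main obstacle I expect is the addition clause of (i): one cannot constructively shortcut it by treating $+$ as a morphism on $\overline{H} \times \overline{H}$, because only the inclusion $\overline{H \times H} \subseteq \overline{H} \times \overline{H}$ is available in general, so the one-variable-at-a-time approximation through the translated functions $f^{1}_{x}$ and $f^{2}_{h_{1}}$ is what actually makes the argument go through.
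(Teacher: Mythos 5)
Your proof is correct. Part (i) is essentially the paper's argument: the same two-stage approximation via the translated functions $f^{1}_{x}$, $f^{2}_{h_1}$ from Proposition~\ref{prp: bgroup3}(iii), just with the roles of $x$ and $y$ used in the opposite order (and you should still note the trivial point $0 \in H \subseteq \overline{H}$). For parts (ii) and (iii), however, you take a genuinely different and more structural route. The paper proves (ii) by a hands-on contradiction: it picks, via the Hausdorff-type property of a separating $F$, an $f \in F$ with $f(\com_x(y)) = 1$ and $f(0^{\mathsmaller{X}}) = 0$, pushes it into $\overline{H}$ to produce a witness $v \in H$ with $(f \circ \com_x)(v) > 0$, and derives $0 > 0$; tightness of $\neq_F$ then yields $\com_x(y) = 0^{\mathsmaller{X}}$, in two cases ($x \in H$, then $x \in \overline{H}$). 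You instead package all of this into Proposition~\ref{prp: top1}(iii): since $F$ separating makes $\{0^{\mathsmaller{X}}\}$ closed, the preimage $\com_x^{-1}(\{0^{\mathsmaller{X}}\})$ is a closed set containing $H$, hence containing $\overline{H}$, and the symmetry of Lemma~\ref{lem: corbgroup6}(v) bootstraps this to both coordinates. Likewise for (iii) the paper chases an explicit witness $v \in H$ with $f(x+v-x) > 0$, whereas you use that $\normal_x^{-1}(\overline{H})$ is closed and contains $H$. Both routes are constructively sound; the paper's version makes the extracted algorithm explicit (which is one of its stated aims), while yours is shorter and isolates the separating hypothesis in the single fact that $\{0^{\mathsmaller{X}}\}$ is closed. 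Your closing remark about why $+$ cannot be applied directly to $\overline{H} \times \overline{H}$ (only $\overline{H \times H} \subseteq \overline{H} \times \overline{H}$ is available, not the converse) correctly identifies the obstacle that forces the one-variable-at-a-time argument in (i).
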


\begin{proof}
(i) Since $H \subseteq \overline{H}$ and $0 \in H$, we get $0 \in \overline{H}$.
Let $x, y \in \overline{H}$, where by definition
$$x \in \overline{H} :\TOT \forall_{f \in F}\big(f(x) > 0 \To \exists_{v \in H}(f(v) > 0)\big),$$
$$y \in \overline{H} :\TOT \forall_{f \in F}\big(f(y) > 0 \To \exists_{v \in H}(f(v) > 0)\big).$$
We show that $x + y \in \overline{H}$ i.e.,
$\forall_{f \in F}\big(f(x+y) > 0 \To \exists_{v \in H}(f(v) > 0)\big)$.
Let $f \in F$ such that $f(x+y) > 0$. By Proposition~\ref{prp: bgroup3}(iii) the map $f_y^2 : X \to \Real \in F$, where
$f_y^2(u) := f(u + y)$, for every $u \in X$. By hypothesis, $f_y^2(x) > 0$. Since $x \in \overline{H}$, there is
$z \in H$ such that 
$$f_y^2(z) > 0 :\TOT f(z + y) > 0 :\TOT f_z^1(y) > 0,$$
where by Proposition~\ref{prp: bgroup3}(iii) the map $f_z^1 : X \to \Real \in F$, where
$f_z^1(u) := f(z + u)$, for every $u \in X$, is in $F$.
Since $y \in \overline{H}$, there is $w \in H$ such that 
$f_z^1(w) > 0 :\TOT f(z + w) > 0$.
Since $H \leq X$, we get $z + w \in H$, which is what we need to show. Next we show 
that $-x \in \overline{H}$ i.e.,
$$\forall_{f \in F}\big(f(-x) > 0 \To \exists_{v \in H}(f(v) > 0)\big).$$
Let $f \in F$ such that $f(-x) > 0 \TOT f_-(x) > 0$. Since $f_- \in F$ and $x \in \overline{H}$, there is $v \in H$ 
such that 
$f_-(v) := f(-v) > 0$. Since $H \leq X$, we get $-v \in H$, and our proof is completed.\\[1mm]
(ii) \textbf{Case I}: $x \in H$ and $y \in \overline{H}$.\\[1mm]
We show that
$\com_x(y) =_{\mathsmaller{X}} 0^{\mathsmaller{X}}$.
Suppose that $\com_x(y) \neq_{\mathsmaller{F}} 0^{\mathsmaller{X}}$. By Proposition 5.2.5 in~\cite{Pe15} there is
$f \in F$ such that $f\big(\com_x(y)\big) = 1$ and $f\big(0^{\mathsmaller{X}}\big) = 0$. Since $y \in \overline{H}$, and
$(f \circ \com_x)(y) = 1 > 0$, and $f \circ \com_x \in F$, there is $v \in H$ such that $(f \circ \com_x)(v) > 0$. Since
$x, v \in H$ and $H$ is abelian, we have that $\com_x(v) = 0^{\mathsmaller{X}}$, hence $0 = f\big(0^{\mathsmaller{X}}\big) =
(f \circ \com_x)(v) > 0$, which is a contradiction. Since $F$ is separating, the canonical apartness relation
$\neq_F$ of $F$ is tight (see Proposition 5.1.3 in~\cite{Pe15}), hence the negation of 
$\com_x(y) \neq_{\mathsmaller{F}} 0^{\mathsmaller{X}}$ 
implies that $\com_x(y) =_{\mathsmaller{X}} 0^{\mathsmaller{X}}$.\\[1mm]
\textbf{Case II}: $x \in \overline{H}$ and $y \in \overline{H}$.\\[1mm]
We show that $\com_x(y) =_{\mathsmaller{X}} 0^{\mathsmaller{X}}$.
Suppose that $\com_x(y) \neq_{\mathsmaller{F}} 0^{\mathsmaller{X}}$. As in the previous case,
there is
$f \in F$ such that $f\big(\com_x(y)\big) = 1$ and $f\big(0^{\mathsmaller{X}}\big) = 0$. Since $y \in \overline{H}$, and
$(f \circ \com_x)(y) = 1 > 0$, there is $v \in H$ such that $(f \circ \com_x)(v) > 0$. By case I we have that 
$\com_v(x) = 0^{\mathsmaller{X}}$, hence by Lemma~\ref{lem: corbgroup6}(v) we get $\com_x(v) = 0^{\mathsmaller{X}}$,
hence $0 = f\big(0^{\mathsmaller{X}}\big) = (f \circ \com_x)(v) > 0$, which is a contradiction. Since $F$ is separating,
we conclude, similarly to Case I, that $\com_x(y) =_{\mathsmaller{X}} 0^{\mathsmaller{X}}$.\\[1mm]
(iii) By Lemma~\ref{lem: 2corbgroup6}(ii) it suffices to show that $\normal_x^{\mathsmaller{\overline{H}}} : \overline{H}
\to \overline{H}$, for every $x \in X$. If $x \in X$ and $y \in \overline{H}$, we show that 
$$\normal_x^{\mathsmaller{\overline{H}}}(y) := x + y - x \in \overline{H}.$$
Let $f \in F$ such that 
$f(x + y - x) > 0 \TOT (f \circ \normal_x)(y) > 0$.
We need to find $u \in H$ such that $f(u) > 0$.
Since $y \in \overline{H}$ and $f \circ \normal_x \in F$, there is $v \in H$ with
$$(f \circ \normal_x)(v) := f(x + v - x) > 0.$$
Since $H$ is normal and $v \in H$, we have that $u := x + v - x \in H$ and $f(u) > 0$.
\end{proof}

As we explain in~\cite{Pe15}, section 5.8, by the Stone-\v Cech theorem for Bishop spaces we have that
the separating hypothesis on $F$ in case (ii) of the previous proposition is not a serious restriction on $F$.
Note that classically, if $X$ is a Hausdorff topological group, then if $H$ is an abelian subgroup, then $\overline{H}$ is
abelian. The $F$-version of a Hausdorff topology is the following (see~\cite{Pe15}, section 5.2):
if $\neq$ is a given apartness relation on $X$, then $F$ is $\neq$-Hausdorff, if $\neq \subseteq \neq_F$. In Proposition
5.2.3. of~\cite{Pe15} we show that this is equivalent to the positive $\neq$-version of the induced neighborhood structure
being Hausdorff. If $F$ is $\neq$-Hausdorff, $n$-many pairwise $\neq$-apart points of $X$ are mapped to
given $n$-many real numbers. This is essential to the proof we gave above. So, we
capture computationally the requirement of a Hausdorff topology for $\overline{H}$
being abelian. 
By Proposition~\ref{prp: tight} the tightness of $\neq_F$ in a Bishop topological group
implies that the induced topology is Hausdorff in the classical sense (classically, a topological group is Hausdorff 
if and only if there is a closed singleton). 
In the next proposition the subset $H$ of $X$ is extensional i.e.,
$H$ is closed under the given equality $=_X$ on $X$, so that the defining property of $\Normal_X(H)$
in the use of the separation scheme is also extensional.

\begin{theorem}\label{thm: bnbh2}
Let $H$ be an extensional subset of $X$. 
The normalizer $\Normal_X(H)$ and the center $\Center_X(H)$ of $H$ in $X$, are defined by
$$\Normal_X(H) := \big\{x \in X \mid \Normal_x^{\mathsmaller{H}} : H \to H\big\},$$
$$\Center_X(H) := \big\{x \in X \mid \forall_{v \in H}\big(\com_x(v) = 0^{\mathsmaller{X}}\big)\big\}.$$
\normalfont (i)
\itshape If $H$ is closed, then $\Normal_X(H)$ is closed.\\[1mm]
\normalfont (ii)
\itshape If $F$ is separating, then $\Center_X(H)$ is closed.
\end{theorem}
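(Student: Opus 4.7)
Both parts follow the template of Theorem~\ref{thm: bnbh1}: we fix $x$ in the closure of the relevant set, fix $v \in H$, and rewrite the defining membership condition so that $x$ becomes the active argument of some $F$-morphism, to which the closure hypothesis can be applied. The fiber-swap identities of Lemmas~\ref{lem: corbgroup6} and~\ref{lem: 2corbgroup6} are the bridge between the $v$-indexed defining conditions and the $x$-indexed closure conditions.

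For (i), I would take $x \in \overline{\Normal_X(H)}$ and $v \in H$, and aim to show $v + x - v \in H$. Since $H$ is closed, it suffices to prove $v + x - v \in \overline{H}$. The key rewriting is $v + x - v = \normal_v(x)$ (Lemma~\ref{lem: 2corbgroup6}(i)), together with $\normal_v \in \Mor(\C F, \C F)$ (Lemma~\ref{lem: 2corbgroup6}(ii)), so that from $f(v + x - v) > 0$ we obtain $(f \circ \normal_v)(x) > 0$ with $f \circ \normal_v \in F$. Applying the closure condition on $x$ supplies $y \in \Normal_X(H)$ with $(f \circ \normal_v)(y) = f(v + y - v) > 0$, and by definition of $\Normal_X(H)$ the element $u := v + y - v = \Normal_y(v)$ lies in $H$, the required witness.

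For (ii), let $x \in \overline{\Center_X(H)}$ and $v \in H$. Since $F$ separates the points of $X$, the apartness $\neq_F$ is tight by Proposition~\ref{prp: tight}, so it suffices to contradict $\com_x(v) \neq_F 0^{\mathsmaller{X}}$. Following the proof of Theorem~\ref{thm: bnbh1}(ii), I would pick $f \in F$ with $f(\com_x(v)) = 1$ and $f(0^{\mathsmaller{X}}) = 0$. The fiber-swap $\com_x(v) = -\com_v(x)$ from Lemma~\ref{lem: corbgroup6}(iv) then gives $(f_- \circ \com_v)(x) > 0$, where $f_- \circ \com_v \in F$ because $\com_v \in \Mor(\C F, \C F)$ by Lemma~\ref{lem: corbgroup6}(iii) and $f_- \in F$. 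Using $x \in \overline{\Center_X(H)}$ produces $y \in \Center_X(H)$ with $(f_- \circ \com_v)(y) > 0$. But the defining property of $\Center_X(H)$ gives $\com_y(v) = 0^{\mathsmaller{X}}$, whence $\com_v(y) = -\com_y(v) = 0^{\mathsmaller{X}}$ by Lemma~\ref{lem: corbgroup6}(iv)--(v), and therefore $(f_- \circ \com_v)(y) = f(0^{\mathsmaller{X}}) = 0$, the desired contradiction.

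The only genuine obstacle is the fiber-swap itself: the closure hypothesis on $x$ is ``vertical'' in the variable $x$, while the defining conditions of $\Normal_X(H)$ and $\Center_X(H)$ are ``horizontal'' in the variable $v \in H$. The symmetry identities of Lemmas~\ref{lem: corbgroup6}(iv) and~\ref{lem: 2corbgroup6}(i) are exactly what realigns these two directions and lets the closure condition be applied to a single $F$-morphism for each fixed pair $(v, f)$. No countable choice is required, since the argument proceeds one $v$ at a time and extracts a single witness per application of the closure condition.
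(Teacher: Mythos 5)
Your proposal is correct and follows essentially the same route as the paper's proof: in both parts you use the fiber-swap identities $\Normal_x(v)=\normal_v(x)$ and $\com_x(v)=-\com_v(x)$ to turn the condition at $v+x-v$ (resp.\ at $\com_x(v)$) into positivity of an $F$-element evaluated at $x$, apply the closure hypothesis to obtain a witness in $\Normal_X(H)$ (resp.\ $\Centerdot$-free witness in $\Center_X(H)$), and then use the defining property of that witness together with closedness of $H$ (resp.\ tightness of $\neq_F$) exactly as the paper does. No gaps.
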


\begin{proof}
(i) We suppose that $H$ is closed i.e.,
$$(\Hyp_1) \ \ \ \ \ \ \ \ \ \ \ \ \ \ \ \ \ \ \ \ \ \ \ \ \ \ \ \ \  \forall_{x \in X}\bigg(\forall_{f \in F}\big(f(x) > 0 \To 
\exists_{v \in H}\big(f(v) > 0\big)\big) \To x \in H\bigg), \ \ \ \ \ \ \ \ \ \ \ \ \ \ \ \ \ \ \ \ \ \ \ \ \ \  $$
and we show that $\Normal_X(H)$ is closed i.e.,
$$(\Goal_1) \ \ \ \ \ \ \ \ \ \ \ \ \ \ \ \ \  \forall_{x \in X}\bigg(\forall_{f \in F}\big(f(x) > 0 \To 
\exists_{u \in \Normal_X(H)}\big(f(u) > 0\big)\big) \To x \in \Normal_X(H)\bigg). \ \ \ \ \  \ \ \ \ \ \ \ \ \ \ \ \ \ 
\  $$
For that we fix some $x \in X$ and we suppose that 
$$(\Hyp_2) \ \ \ \ \ \ \ \ \ \ \ \ \ \ \ \ \ \ \ \ \ \ \ \ \ \ \ \ \ \ \ \ \  \forall_{f \in F}\big(f(x) > 0 \To 
\exists_{u \in \Normal_X(H)}\big(f(u) > 0\big)\big), \ \ \ \ \ \ \ \ \ \ \ \ \ \ \ \ \ \ \ \ \ 
\ \ \ \ \ \ \ \ \ \ \ \ \ \ \ \ \ \ \ \ \ \ \ \ $$
and we show that
$(\Goal_2) \ \ x \in \Normal_X(H) :\TOT \Normal_x^{\mathsmaller{H}} : H \to H$. Let $v \in H$ be fixed, and we show 
$(\Goal_3) \ \ \Normal_x^{\mathsmaller{H}}(v) := v + x - v \in H.$ By $\Hyp_1$ it suffices to show the following
$$(\Goal_4) \ \ \ \ \ \ \ \ \ \ \ \ \ \ \ \ \ \ \ \ \ \ \ \ \ \ \ \ \ \ \ \ \
\forall_{f \in F}\bigg(f\big(\Normal_x^{\mathsmaller{H}}(v)\big) > 0 \To 
\exists_{w \in H}\big(f(w) > 0\big)\bigg). \ \ \ \ \ \ \ \ \ \ \ \ \ \ \ \ \ \ \ \ \ 
\ \ \ \ \ \ \ \ \ \ \ \ \ \ \ \ \ \ \ \ \ \ \ \ $$
If we fix $f \in F$, we suppose that 
$$(\Hyp_3) \ \ \ \ \ \ \ \ \ \ \ \ \ \ \ \   f\big(\Normal_x^{\mathsmaller{H}}(v)\big) > 0  \TOT f\big(\normal_v(x)\big) > 0
\TOT (f \circ \normal_v))(x) > 0, \ \ \ \ \ \ \ \ \ \ \ \ \ \ \ \ \ \ \ \ \ \ \ \ \  $$
and we show $(\Goal_5) \ \ \exists_{w \in H}\big(f(w) > 0\big)$. Since $f \circ \normal_v \in F$, by $(\Hyp_2)$ there is 
$u \in \Normal_X(H)$ with $(f \circ \normal_v)(u) := f(v +u - v) > 0$. Since $u \in \Normal_X(H)$, 
$\Normal_u^{\mathsmaller{H}} : H \to H$, and since $v \in H$, we get $\Normal_u^{\mathsmaller{H}}(v) := v + u - v \in H$.
Hence, $w := v + u - v \in H$ and $f(w) > 0$.\\
(ii) We fix $x \in X$, we suppose that
$$(\Hyp_1^*) \ \ \ \ \ \ \ \ \ \ \ \ \ \ \ \ \ \ \ \ \ \ \ \ \ \ \ \ \ \ \ \ \  \forall_{f \in F}\big(f(x) > 0 \To 
\exists_{u \in \Center_X(H)}\big(f(u) > 0\big)\big), \ \ \ \ \ \ \ \ \ \ \ \ \ \ \ \ \ \ \ \ \ 
\ \ \ \ \ \ \ \ \ \ \ \ \ \ \ \ \ \ \ \ \ \ \ \ $$
and we show $(\Goal_1^*) \ \ x \in \Center_X(H) :\TOT \forall_{v \in H}\big(\com_x(v) = 0^{\mathsmaller{X}}\big)$. 
Let $v \in H$ be fixed. Since $F$ is a separating Bishop topology, it suffices to 
prove $\neg \big(\com_x(v) \neq_F 0^{\mathsmaller{X}}\big)$. If we suppose that
$\com_x(v) \neq_F 0^{\mathsmaller{X}}$, there is $f \in F$ such that 
$$f\big(\com_x(v)\big) = 1 > 0 \ \ \ \& \ \ \ f\big(0^{\mathsmaller{X}}\big) = 0.$$
By Lemma~\ref{lem: corbgroup6}(iv) we get
$$1 = f\big(\com_x(v)\big) = f\big(-\com_v(x)\big) := f_-\big(\com_v(x)\big) \ \ \ \& \ \ \ 0 = 
f\big(0^{\mathsmaller{X}}\big) = f_-\big(0^{\mathsmaller{X}}\big).$$
Since $f_ \in F$, we have that $f_- \circ \com_v \in F$ and $(f_- \circ \com_v)(x) > 0$. By $(\Hyp_1^*)$ there is 
$u \in \Center_X(H)$ such that $(f_- \circ \com_v)(u) > 0$. Since $u \in \Center_X(H)$, we get
$\forall_{w \in H}\big(\com_u(w) = 0^{\mathsmaller{X}}\big)$. Moreover, 
$(f_- \circ \com_v)(u) := f_-(v + u - v - u) > 0$. 
Since $v \in H$, we get
$$v + u - v - u := \com_v(u) = - \com_u(v) = - 0^{\mathsmaller{X}} = 0^{\mathsmaller{X}},$$
hence $f_-\big(0^{\mathsmaller{X}}\big) > 0$, which contradicts the previously established equality 
$f_-\big(0^{\mathsmaller{X}}\big) = 0^{\mathsmaller{X}}$.
\end{proof}

\begin{proposition}\label{prp: kernel}
If $G$ is a separating Bishop topology on $Y$ and $h \in \MOR(\B {\C F}, \B {\C G})$, the kernel 
$\Ker(h) := \big\{x \in X \mid h(x) =_Y 0^{\mathsmaller{Y}}\big\}$ of $h$ is a closed set in $\C F$.
\end{proposition}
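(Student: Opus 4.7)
The plan is to reduce the statement to two earlier results and avoid any direct manipulation of the definition of closure. The key observation is that $\Ker(h)$ is precisely the preimage $h^{-1}(\{0^{\mathsmaller{Y}}\})$, so it suffices to show that $\{0^{\mathsmaller{Y}}\}$ is closed in $Y$ and then appeal to the fact that Bishop morphisms reflect closedness.

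First, I would invoke Proposition~\ref{prp: tight}: since $G$ is assumed to be a separating Bishop topology on $Y$, condition (iii) of that proposition gives that every singleton $\{y\}$ is closed in the neighbourhood structure induced by $G$; in particular, $\{0^{\mathsmaller{Y}}\}$ is closed in $Y$.

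Next, since $h \in \MOR(\B{\C F}, \B{\C G})$, by Definition~\ref{def: BTopGrp} we have $h \in \Mor(\C F, \C G)$, i.e., $h$ is in particular a Bishop morphism between the underlying Bishop spaces. By Proposition~\ref{prp: top1}(iii), the inverse image under a Bishop morphism of a closed set is closed. Applying this to the closed set $\{0^{\mathsmaller{Y}}\} \subseteq Y$ and noting the extensional equality
\[
\Ker(h) \;=\; \{x \in X \mid h(x) =_Y 0^{\mathsmaller{Y}}\} \;=\; h^{-1}(\{0^{\mathsmaller{Y}}\}),
\]
we conclude that $\Ker(h)$ is closed in $X$.

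No genuine obstacle arises here; the work has been done in the two earlier propositions, and the only thing worth being careful about is that the separating hypothesis on $G$ (rather than on $F$) is exactly what is needed to make $\{0^{\mathsmaller{Y}}\}$ closed, and that the group-homomorphism content of $h$ is not used at all in the argument — only its status as a Bishop morphism is invoked.
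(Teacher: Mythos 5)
Your proof is correct, and it reaches the conclusion by a genuinely different decomposition than the paper's. You observe that $\Ker(h) = h^{-1}\big(\{0^{\mathsmaller{Y}}\}\big)$, obtain the closedness of $\{0^{\mathsmaller{Y}}\}$ in $Y$ from the separating hypothesis on $G$ via Proposition~\ref{prp: tight}(iii), and then invoke Proposition~\ref{prp: top1}(iii) to pull the closed singleton back along the Bishop morphism $h$. The paper instead argues directly: given $x$ in the closure of $\Ker(h)$, it uses the tightness of $\neq_G$ (Proposition~\ref{prp: tight}(ii)) to reduce the goal to refuting $h(x) \neq_G 0^{\mathsmaller{Y}}$, takes a normalized $g \in G$ with $g(h(x)) = 1$ and $g\big(0^{\mathsmaller{Y}}\big) = 0$, and derives a contradiction from $g \circ h \in F$ together with the closure hypothesis on $x$. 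The two arguments have essentially the same mathematical content --- unfolding Proposition~\ref{prp: top1}(iii) and the proof of Proposition~\ref{prp: tight}(iii) recovers the paper's computation --- but yours is more modular and shorter, whereas the paper's inlined version makes the extracted algorithm explicit, in keeping with its stated emphasis on computational content. You are also right that only the Bishop-morphism part of $h$, and not its homomorphism part, is used; the same is true of the paper's proof.
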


\begin{proof}
Let $x \in X$ such that 
$\forall_{f \in F}\big(f(x) > 0 \To \exists_{v \in \Ker(h)}\big(f(v) > 0\big)\big)$.
Since $\neq_G$ is tight, it suffices to show that $\neg \big(h(x) \neq_G 0^{\mathsmaller{Y}}\big)$. If 
$h(x) \neq_G 0^{\mathsmaller{Y}}$, there is $g \in G$ such that $g(h(x)) = 1 > 0$ and $g\big(0^{\mathsmaller{Y}}\big) = 0$.
Since $h \in \Mor(\C F, \C G)$, we have that $g \circ h \in F$. As $(g \circ h)(x) > 0$, 
there is $v \in \Ker(h)$ such that $0 = g\big(0^{\mathsmaller{Y}}\big) = g(h(v)) := (g \circ h)(v) > 0$, which 
is a contradiction.
\end{proof}

\begin{theorem}[Characterisation of a closed (open) subgroup]\label{thm: chars}
Let $C$ be a subgroup of $X$.\\[1mm]
\normalfont (i)
\itshape $C$ is closed if and only if there is an open set $O$ in $X$ such that $O \cap C$ is inhabited 
and closed in $O$.\\[1mm]
\normalfont (ii)
\itshape 
$C$ is open if and only if there is an inhabited, open set $O$ in $X$ such that $O \subseteq C$.
\end{theorem}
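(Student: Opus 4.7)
Plan for (i). For the forward direction, I take $O := X$, which is open since $X = U(\overline{1}^X)$; then $O \cap C = C$ is closed in $O$ by hypothesis and contains $0^X$. For the converse, suppose $O$ is open, $c_0 \in O \cap C$, and $O \cap C$ is closed in the subspace $O$ (equipped with its induced Bishop topology $F_{|O}$). Openness of $O$ yields $f_0 \in F$ with $f_0(c_0) > 0$ and $U(f_0) \subseteq O$. Given $x \in \overline{C}$, the plan is to use the translated test function $g_1 := f_0 \circ \big(+^1_{c_0 - x}\big) \in F$ (Proposition~\ref{prp: bgroup3}(iii)), which satisfies $g_1(x) = f_0(c_0) > 0$, and extract $c \in C$ with $g_1(c) > 0$. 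Setting $d := c_0 - x + c$ gives $d \in U(f_0) \subseteq O$ directly; moreover, $d \in \overline{C}$ because $\overline{C}$ is a subgroup of $X$ by Theorem~\ref{thm: bnbh1}(i) that contains $x$, $c_0$ and $c$.

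The heart of the argument is showing that $d$ lies in the closure of $O \cap C$ taken inside the subspace $O$, i.e., that the witnesses for $d \in \overline{C}$ can actually be chosen to land in $O$. For any $f \in F$ with $f(d) > 0$, I form $f \wedge f_0 \in F$; since $d \in U(f_0)$, we have $(f \wedge f_0)(d) > 0$, and applying $d \in \overline{C}$ produces $c' \in C$ with $f(c') > 0$ and $f_0(c') > 0$, whence $c' \in U(f_0) \cap C \subseteq O \cap C$. Closedness of $O \cap C$ in $O$ then yields $d \in O \cap C \subseteq C$. Solving the relation $d = c_0 + (-x) + c$ for $x$ gives $x = c + (-d) + c_0$, and since $c, d, c_0 \in C$ and $C$ is a subgroup, $x \in C$. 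The main obstacle is precisely this ``localization'' of the closure witnesses inside $O$: without the $f \wedge f_0$ trick one only extracts witnesses in $C$ and cannot invoke the subspace-closedness hypothesis on $O \cap C$.

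Plan for (ii). The forward direction is immediate by taking $O := C$, which is inhabited because $0^X \in C$. For the converse, fix an inhabited open $O \subseteq C$, pick $c_0 \in O$, and choose $f_0 \in F$ with $f_0(c_0) > 0$ and $U(f_0) \subseteq O$. For any $x \in C$, the translate $g := f_0 \circ \big(+^1_{c_0 - x}\big)$ belongs to $F$ by Proposition~\ref{prp: bgroup3}(iii) and satisfies $g(x) = f_0(c_0) > 0$. Moreover, if $z \in U(g)$ then $c_0 - x + z \in U(f_0) \subseteq O \subseteq C$; since $C$ is a subgroup and $x, c_0 \in C$, this forces $z \in C$. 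Hence $U(g) \subseteq C$, witnessing openness of $C$ at the arbitrary point $x \in C$.
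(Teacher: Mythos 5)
Your proposal is correct and follows essentially the same route as the paper: translate the witnessing function by a group element so that it is positive at $x$, land the resulting point $d = c_0 - x + c$ in $U(f_0)\subseteq O$, and use the $f\wedge f_0$ trick together with $\overline{C}$ being a subgroup (Theorem~\ref{thm: bnbh1}(i)) to force the closure witnesses into $O\cap C$. The only differences are cosmetic (left versus right translations and the labelling of the two directions of each equivalence).
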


\begin{proof}
(i) Let $O$ be open in $X$ such that $O \cap C$ is inhabited 
and closed in $O$. We show that $C$ is closed. Suppose that $x \in \overline{C}$ i.e., i.e., if $f(x) > 0$, 
there is $u \in C$ such that $f(u) > 0$, for every $h \in F$. We prove that $x \in C$. 
Let $c_0 \in O \cap C$. Since $O$ is open, there is $g \in F$
such that $g(c_0) > 0$ and $U(g) \subseteq O$. Since $g^1_{-x + c_0} \in F$ and
$$g^1_{-x + c_0}(x) := g(x-x+c_0) = g(c_0) > 0,$$
by our hypothesis on $x$ there is $c \in C$ such that 
$$g^1_{-x + c_0}(c) := g(c-x+c_0) > 0.$$
As $U(g) \subseteq O$, we get $c-x+c_0 \in O$. The hypothesis ``$O \cap C$ is closed in $O$'' means 
$$\forall_{z \in O}\big(z \in \overline{O \cap C} \To z \in O \cap C\big).$$
Let $z_0 := c-x+c_0 \in O$. We show that $z_0 \in \overline{O \cap C}$, hence $z_0 \in O \cap C$. Since $C$ is a subspace,
and since then $z_0 \in C$, we get the required membership $x \in C$. 
To show that $z_0 \in \overline{O \cap C}$, let $f \in F$ such that $f(z_0) > 0$.
We find $w \in O \cap C$ such that $f(w) > 0$. Since $f(z_0) > 0$ and $g(z_0) > 0$, we have that $(f \wedge g)(z_0) > 0$
(see~\cite{BV11}, p.~57). By Theorem~\ref{thm: bnbh1}(i) $\overline{C}$ is a subgroup of $X$. Since
$C \subseteq \overline{C}$, and $c, c_0, -x \in \overline{C}$, we get $z_0 \in \overline{C}$. Since $f \wedge g \in F$ and
$(f \wedge g)(z_0) > 0$, there is $w \in C$ such that 
$$(f \wedge g)(w) > 0.$$
Since $g(w) \geq (f \wedge g)(w) > 0$, we get $w \in O$, hence $w \in O \cap C$. 
Since $f(w) \geq (f \wedge g)(w) > 0$, we conclude that $f(w) > 0$, as required. For the converse,
if $C$ is closed, then $X$ is open, $C = C \cap X$ is inhabited by $0$ and it is trivially closed in $X$. \\
(ii) Let $O$ be open in $X$ such that $O \subseteq C$, and let $c_0 \in O$. Suppose that $g \in F$ with
$g(c_0) > 0$ and $U(g) \subseteq O$. Since also $U(g) \subseteq C$, we get $c_0 \in C$. Let $c \in C$. The function
$g^1_{-c + c_0} \in F$ and $g^1_{-c + c_0}(c) = g(c_0) > 0$. We show that $U(g^1_{-c + c_0}) \subseteq C$, hence, since 
$c$ is an arbitrary element of $C$, we conclude that $C$ is open. Let $u \in X$ such that 
$g^1_{-c + c_0}(u) := g(u-c+c_0) > 0$. As $U(g) \subseteq O \subseteq C$, we get $u-c+c_0 \in C$. As $c, c_0 \in C$ and $C$
is a subgroup of $X$, we have that $u \in C$. For the converse, if $C$ is open, then $C$ is an inhabited open set included 
in $C$.
\end{proof}

The classical proof of Theorem~\ref{thm: chars}(i) is based on multiple use of negation (see~\cite{Kr17}). As usual in 
constructive mathematics, we replaced the ``non-empty intersection'' of $O$ and $C$ in 
Theorem~\ref{thm: chars}(i) with $O \between C$, and 
the ``non-emptyness'' of $O$ in Theorem~\ref{thm: chars}(ii) with the stronger inhabitedness of $O$. Although, in general, 
it is not possible to show that the $F$-complement $X \setminus_F C$ of a closed set in a Bishop space $X$ is equal to 
$X \setminus C$, there is a number of cases in the theory of Bishop topological 
groups where this is possible.

\begin{corollary}\label{cor: corchar1}
If $C$ is closed in $X$, such that $X \setminus C$ is a subgroup of $X$ and $X \setminus_F C$ is inhabited, then
$X \setminus C = X \setminus_F C$ and $X \setminus C$ is clopen.
\end{corollary}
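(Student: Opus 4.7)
The plan is to combine Theorem~\ref{thm: cr} with Theorem~\ref{thm: chars}(ii) and Proposition~\ref{prp: closed2} in a short chain. By Theorem~\ref{thm: cr}, $X \setminus_F C$ is an open set included in $X \setminus C$, and moreover it is the largest such open set; in particular the inclusion $X \setminus_F C \subseteq X \setminus C$ is automatic, so only the reverse inclusion requires work.

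For the reverse inclusion I would argue as follows. Since, by hypothesis, $X \setminus_F C$ is inhabited and, by Theorem~\ref{thm: cr}, it is open and contained in $X \setminus C$, we have exhibited an inhabited open subset of the subgroup $X \setminus C$. Applying Theorem~\ref{thm: chars}(ii) to the subgroup $X \setminus C$ with the witnessing inhabited open set $O := X \setminus_F C$, we conclude that $X \setminus C$ is itself open in $X$. But then $X \setminus C$ is an open set included in $X \setminus C$, and since Theorem~\ref{thm: cr} tells us that $X \setminus_F C$ is the \emph{largest} open set included in $X \setminus C$, we obtain $X \setminus C \subseteq X \setminus_F C$. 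Together with the trivial inclusion this gives $X \setminus C = X \setminus_F C$.

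Finally, for the clopen claim, I would invoke Proposition~\ref{prp: closed2}: we have just shown that $X \setminus C$ is an open subgroup of $X$, so it is also closed. Hence $X \setminus C$ is simultaneously open and closed, i.e., clopen.

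There is really no hard step here; the corollary is essentially a repackaging of earlier results. The only subtle point is recognising that the inhabitedness hypothesis on $X \setminus_F C$ is precisely the ingredient needed to feed $X \setminus C$ into Theorem~\ref{thm: chars}(ii), so that the positive $F$-complement can be promoted to the full set-theoretic complement once a subgroup structure is available.
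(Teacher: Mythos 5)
Your proposal is correct and follows exactly the paper's own argument: Theorem~\ref{thm: cr} for openness and the inclusion $X \setminus_F C \subseteq X \setminus C$, Theorem~\ref{thm: chars}(ii) with the inhabited open set $X \setminus_F C$ to get $X \setminus C$ open, the maximality clause of Theorem~\ref{thm: cr} for the reverse inclusion, and Proposition~\ref{prp: closed2} for closedness. Nothing is missing.
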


\begin{proof}
By Theorem~\ref{thm: cr} we have that $X \setminus_F C$ is open in $X$ and $X \setminus_F C \subseteq X \setminus C$.
Since $X \setminus_F C$ is inhabited and $X \setminus C$ is a subgroup of $X$, by Theorem~\ref{thm: chars}(ii) we have that
$X \setminus C$ is open. As $X \setminus C \subseteq X \setminus C$, by Theorem~\ref{thm: cr} we get 
$X \setminus C \subseteq X \setminus_F C$, hence $X \setminus C = X \setminus_F C$. As $X \setminus C$ is an open
subgroup, by Proposition~\ref{prp: closed2} we have that $X \setminus C$ is also closed.
\end{proof}

Notice that classically a subgroup $C$ of a topological group is either clopen or has empty interior. 
By replacing the hypothesis of ``non-empty interior of $C$'' with the positive ``existence of an inhabited 
open subset of $C$'', constructively we have the following corollary.

\begin{corollary}\label{cor: corchar2}
Let $C$ be a subgroup of $X$. If $O$ is an inhabited open set in $X$ such that $O \subseteq C$, then $C$ is clopen.
\end{corollary}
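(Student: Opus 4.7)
The plan is to chain two results already established in the paper, so this corollary reduces to a short bookkeeping argument.

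First I would apply Theorem~\ref{thm: chars}(ii) directly to the hypotheses. That theorem states, for any subgroup $C$ of $X$, that $C$ is open if and only if there is an inhabited open set $O$ of $X$ included in $C$. Since the corollary furnishes exactly such an $O$, and $C$ is assumed to be a subgroup, we immediately obtain that $C$ is open in $X$.

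Next, having established that $C$ is an open subgroup of $X$, I would invoke Proposition~\ref{prp: closed2}, which asserts that every open subgroup of a Bishop topological group is closed. Applying this to $C$ yields that $C$ is closed as well, and therefore $C$ is clopen.

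The main point to be careful about is purely bureaucratic rather than a genuine obstacle: the two invoked results need their hypotheses exactly as given. Theorem~\ref{thm: chars}(ii) requires the subset witnessing openness to be \emph{inhabited} (not merely nonempty in the negative sense), which is precisely what is assumed of $O$; and Proposition~\ref{prp: closed2} requires $C$ to be a subgroup, which is the standing hypothesis of the corollary. No further computation or constructive argument is required, and in particular the positive definition of closedness from Section~\ref{sec: nbhd} flows through Proposition~\ref{prp: closed2} without any appeal to classical complementation.
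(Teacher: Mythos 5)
Your proposal is correct and follows exactly the paper's own argument: Theorem~\ref{thm: chars}(ii) gives that $C$ is open, and Proposition~\ref{prp: closed2} then gives that $C$ is closed. Nothing further is needed.
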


\begin{proof}
By Theorem~\ref{thm: chars}(ii) $C$ is open, and by Proposition~\ref{prp: closed2} $C$ is also closed.
\end{proof}

%
%
%
%
%
%

Here we have presented some very first, fundamental results in the theory of Bishop topological groups.
Clearly, we can work similarly for other algebraic structures, like rings and modules, equipped with a compatible
Bishop topology. There is a plethora of open questions related to Bishop topological groups. The complete regularity of 
a Bishop topological group, the study of ``compact'' subsets of Bishop topological groups, 
the uniform continuity of the elements of $F$ when $X$ is a compact Bishop topological group with $F$,
and the interpretation of local compactness in the theory of Bishop topological groups, are some of the numerous topics
to which our future work could hopefully be directed.\\[2mm]

%
%
%
%
%
%

\noindent
\textbf{Acknowledgements}: The content of this paper was part of my talk in ``Conference Algebra and Algorithms''
that took place in Djerba, Tunisia, in February 2020. I would like to thank the organisers, Peter Schuster 
and Ihsen Yengui, for inviting me. This research 
was supported by LMUexcellent, funded by the Federal
Ministry of Education and Research (BMBF) and the Free State of Bavaria under the
Excellence Strategy of the Federal Government and the L\"ander.


\end{document}